\newtheorem{thm}{Theorem}[section]
\newtheorem{prop}[thm]{Proposition}
\newtheorem{rmk}[thm]{Remark}
\renewcommand{\P}{\mathbb{P}}
\newcommand{\D}{\displaystyle}
\newcommand{\non}{\nonumber}
\newcommand{\E}{\mathbb{E}}
\newcommand{\RR}{\mathbb{R}}
\newcommand{\mo}{\mathcal{O}^{(p,q)}}
\newcommand{\mop}{\mathcal{O}^{+}}
\newcommand{\mom}{\mathcal{O}^{-}}
\newcommand{\wpq}{W^{(p,p+q)}}
\numberwithin{equation}{section}
\begin{document}

\title{An excursion theoretic approach to Parisian ruin problem}

\vskip 1cm

\author{Bo Li and Xiaowen Zhou}

\maketitle

\centerline{\Large Abstract}
\vskip 0.5cm
Applying excursion theory, we re-express several well studied fluctuation quantities associated to Parisian ruin problem for L\'evy risk processes in terms of integrals with respect to excursion measure for spectrally negative L\'evy process.
We show that these new expressions reconcile with the previous results on Parisian ruin problem.

\section{Introduction}

Parisian time was first proposed in \cite{Ch97} for option pricing in mathematical finance, and was later introduced to risk theory in \cite{DaWu99a} and in \cite{DaWu99b} for risk models with implementation delay. More precisely, Parisian ruin occurs once the surplus process has continuously stayed below level $0$ for a time interval of length $d$, i.e. an excursion below $0$ of the surplus process has first reached length $d$. \cite{Loeffen2013} first investigated the Parisian ruin problem for spectrally negative L\'evy risk processes and obtained an expression for the Parisian ruin probability. Since then there have been a long list of papers on Parisian ruin related problems for various risk models.

Although the Parisian ruin is defined using excursions for the surplus process and the excursion theory for spectrally negative L\'evy processes had been well studied, surprisingly, in the previous work, excursion theory was rarely applied directly to derive expressions for the Parisian ruin probability or Laplace transform for the Parisian ruin time. Instead, some approximating-limiting arguments are often implemented except in \cite{WZ20} where the excursion theory was adopted to obtain expressions for quantities related to the so called draw-down Parisian ruin.

The main purpose of this paper is to introduce a new systematic excursion theoretic approach to the study of Parisian ruin problems for L\'evy risk processes. We believe this alternative approach will lead to new advances in the study of Parisian ruin related problems for L\'evy risk processes. To this end, we first find joint Laplace transforms (in terms of scale functions) on the positive and negative durations up to certain exit times for a generic excursion under excursion measure, which relies on Laplace transforms of weighted occupation times obtained in \cite{LiZ2019}. 
Using excursion theory we then express Parisian ruin related quantities such as the Parisian ruin probability, Laplace transform of the Parisian ruin time and potential measure up to the Parisian ruin time in terms of integrals with respect to the excursion measure. In particular, we can recover the nowly well known results in \cite{Loeffen2013} and \cite{Loeffen2018}.

The rest of the paper is arranged as follows. In Section \ref{Pre} we present the Parisian ruin for L\'evy risk processes together with some fluctuation results and introduce the excursion theory for spectrally negative L\'evy processes. The main results on new expressions for the above-mentioned Parisian ruin related quantities are obtained in Section \ref{Main} applying the excursion theory. The previous results of \cite{Loeffen2013} and \cite{Loeffen2018} are recovered in Section \ref{recover}.

\section{Preliminaries}\label{Pre}
\subsection{L\'evy insurance surplus processes}
A process $X$ is a surplus process for a L\'evy insurance risk model if $X$ is a spectrally negative L\'evy process (SNLP for short) defined on a filtered probability space $\big(\Omega,\mathcal{F},(\mathcal{F}_{t})_{t\ge0},\P\big)$, that is, a stochastic process with independent stationary increments and without positive jumps.
For $X_0=0$, its Laplace exponent exists and takes the L\'evy-Khintchine form
\begin{equation}\label{defn:psi}
\psi(\theta):=\frac{1}{t}\log\E\big[e^{\theta X(t)}\big]=\gamma\theta+\frac{\sigma^{2}}{2}\theta^{2}+\int_{0}^{\infty}\big(e^{-\theta z}-1+\theta z\mathbf{1}(z\le1)\big)\nu(dz),\,\, \theta\geq 0,
\end{equation}
where $\gamma\in\mathbb{R},\sigma\ge0$ are constants and $\nu$ is a $\sigma$-finite measure on $(0,\infty)$ with $\int_{0}^{\infty}(1\wedge z^{2})\nu(dz)<\infty$.
Function $\psi$ in \eqref{defn:psi} is continuous and strictly convex on $[0,\infty)$, whose right inverse is defined by $\phi(s):=\sup\{\theta\ge0: \psi(\theta)=s\}$ for $s\ge0$.

We only consider the process $X$ that is not monotone.
Define the upward and downward first passage times for process $X$, respectively, by
\[
\tau_{x}^{+}:=\inf\{t>0: X(t)>x\}
\quad\text{and}\quad
\tau_{x}^{-}:=\inf\{t>0: X(t)<x\},
\]
with the convention $\inf\emptyset=\infty$.
The first passage problems of $X$ have been well studied via the fluctuation theory of SNLP,
and the following scale functions $(W^{(q)})_{q\ge0}$ plays a key role,
which for $q\geq 0$ is defined as the unique continuous and increasing function on $[0,\infty)$ satisfying
\begin{equation}
\label{defn:wq}
\int_{0}^{\infty}e^{-sy}W^{(q)}(y)\,dy=\frac{1}{\psi(s)-q}\quad\text{for $s>\phi(q)$},
\end{equation}
with extension $W^{(q)}(x)=0$ for $x<0$. We write $W(x)\equiv W^{(0)}(x)$ for $q=0$ and refer the readers to \cite{Bertoin96book} and \cite{Kyprianou2014book} for more detailed discussions on SNLP and its scale functions. The second scale function is defined as for $x\in\mathbb{R}$
\begin{equation}
\label{defn:zq}
Z^{(q)}(x):=1+q\int_{0}^{x}W^{(q)}(z)\,dz=1+ q\int_{-\infty}^{x}W^{(q)}(z)\,dz.
\end{equation}
The following limits can be found in \cite[Exe. 8.5]{Kyprianou2014book} for $q>0$
\begin{equation}
\label{wz-limit}
e^{-\phi(q)x}W^{(q)}(x)\uparrow\phi'(q)<\infty
\quad\text{and}\quad
\frac{Z^{(q)}(x)}{W^{(q)}(x)}\to\frac{q}{\phi(q)}
\quad\text{as $x\to\infty$}
\end{equation}
where the limits also hold for the case $q=0$ with $\psi'(0)\neq0$.
If $\psi'(0)=0$, then $\phi(0)=0$ and we have
\[W(x)\to\infty,\quad
\frac{Z(x)}{W(x)}=\frac{1}{W(x)}\to0
\quad\text{and}\quad
\frac{W(x-y)}{W(x)}\to1.\]

Define, respectively, the occupation time above and below $0$ for $X$ by
\[\mop(t):=\int_{0}^{t}\mathbf{1}(X(s)>0)\,ds
\quad\text{and}\quad\mom(t):=\int_{0}^{t}\mathbf{1}(X(s)<0)\,ds,
\]
and for any $p,q>0$, define the weighted occupation time as
\[
\mo(t):=\int_{0}^{t}\big(p\mathbf{1}(X(s)>0)+q\mathbf{1}(X(s)<0)\big)\,ds=p\mop(t)+q\mom(t).
\]
The following results on $\mo$ were proved in \cite{Loeffen2014} and \cite{LiYZ2014},
except that on the occupation time at inverse local time which was proved in \cite{LiZ2019}.
\begin{prop}\label{prop:pq:ab}
Given $b>0>c$ and $p,q>0$,
we have for any $x\in(c,b)$
\begin{gather}
\E_{x}\Big[\exp\Big(-\big(p\mop+q\mom\big)(\tau_{b}^{+})\Big); \tau_{b}^{+}\le\tau_{c}^{-}\Big]
=\frac{W^{(p,q)}(x,c)}{W^{(p,q)}(b,c)},\label{lapocc:b<c}\\
\E_{x}\Big[\exp\Big(-\big(p\mop+q\mom\big)(\tau_{c}^{-})\Big); \tau_{c}^{-}\le\tau_{b}^{+}\Big]
=Z^{(p,q)}(x,c)-\frac{W^{(p,q)}(x,c)}{W^{(p,q)}(b,c)}Z^{(p,q)}(b,c),\label{lapocc:c<b}
\end{gather}
where $W^{(p,q)},Z^{(p,q)}$ are functions on $\RR^{2}$ given by
\begin{equation}\label{defn:wzpq}
\begin{aligned}
W^{(p,q)}(x,y)
:=&\ W^{(q)}(x-y)+(p-q)\int_{0}^{x}W^{(p)}(x-z)W^{(q)}(z-y)\,dz,\\
Z^{(p,q)}(x,y)
:=&\ Z^{(q)}(x-y)+(p-q)\int_{0}^{x}W^{(p)}(x-z)Z^{(q)}(z-y)\,dz.
\end{aligned}
\end{equation}
For the weighted occupation time at inverse local time, we have
\begin{equation}
\E\Big[\exp\Big(-\mo(L^{-1}(t))\Big); L^{-1}(t)<\tau_{b}^{+}\wedge\tau_{c}^{-}\Big]
=\exp\Big(\frac{-W^{(p,q)}(b,c)\cdot t}{W^{(p)}(b)W^{(q)}(-c)}\Big).\label{lapocc:bc}
\end{equation}
Moreover, for any bounded and measurable function $f$ on $\mathbb{R}$, we have 
\begin{equation}\label{lapocc:resl}
\E_{x}\bigg[\int_{0}^{\tau_{b}^{+}\wedge\tau_{c}^{-}}e^{-\mo(t)} f\big(X(t)\big)\,dt\bigg]
=\int_{c}^{b}f(y)\Big(\frac{W^{(p,q)}(x,c)}{W^{(p,q)}(b,c)}W^{(p,q)}(b,y)-W^{(q)}(x,y)\Big)\,dy.
\end{equation}
\end{prop}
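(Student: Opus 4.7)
My plan is to prove identities \eqref{lapocc:b<c}--\eqref{lapocc:resl} by bootstrapping from the classical single-rate two-sided exit identities, using a strong Markov decomposition at level $0$ to patch together the positive and negative portions of the path, and then obtaining \eqref{lapocc:bc} through It\^o's excursion theory.

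For \eqref{lapocc:b<c}, write $g(x):=\E_{x}[\exp(-\mo(\tau_{b}^{+}));\tau_{b}^{+}\le\tau_{c}^{-}]$ and decompose on the event $\{\tau_{b}^{+}<\tau_{0}^{-}\}$ versus its complement. On the first event $\mo(\tau_{b}^{+})=p\tau_{b}^{+}$, so that contribution equals $W^{(p)}(x)/W^{(p)}(b)$ by the classical two-sided exit identity at discount $p$. On the complement, the strong Markov property at $\tau_{0}^{-}$ together with the explicit $p$-killed overshoot law expressed through $W^{(p)}$ reduces the problem to $g(y)$ for $y\in(c,0)$. The mirror argument for $x\in(c,0)$, decomposing at $\tau_{0}^{+}$, produces a second coupled integral equation for $g$. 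Direct substitution shows that $W^{(p,q)}(x,c)/W^{(p,q)}(b,c)$ solves this system thanks to the convolution structure built into \eqref{defn:wzpq}. The proof of \eqref{lapocc:c<b} runs along the same lines, with the extra term $Z^{(p,q)}(x,c)$ arising as the limiting contribution when $b\to\infty$ and the subtractive form being forced by balancing the two exit probabilities.

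For \eqref{lapocc:bc} I would apply the excursion-theoretic compensation formula. Under the local time parametrization, the excursions of $X$ from $0$ form a Poisson point process of intensity $n$, so that
\[
\E\Big[\exp\big(-\mo(L^{-1}(t))\big);\,L^{-1}(t)<\tau_{b}^{+}\wedge\tau_{c}^{-}\Big]=\exp\Big(-t\cdot n\big(1-e^{-\mo(\zeta)}\mathbf{1}(\text{excursion stays in }(c,b))\big)\Big),
\]
and the excursion-measure integrand can be computed by applying \eqref{lapocc:b<c} and \eqref{lapocc:c<b} at $x=0^{\pm}$, together with the standard identification of $n$-mass of excursions reaching given levels through $W^{(p)}$ and $W^{(q)}$. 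This produces exactly the rate $W^{(p,q)}(b,c)/[W^{(p)}(b)W^{(q)}(-c)]$. Finally, \eqref{lapocc:resl} follows from a Feynman--Kac / resolvent argument: both sides solve the same Dirichlet problem for the generator of $X$ killed at rate $p\mathbf{1}_{(0,\infty)}+q\mathbf{1}_{(-\infty,0)}$ with zero boundary data at $c$ and $b$, and uniqueness combined with the explicit check at $f\equiv 1$ (which is equivalent to the earlier identities) pins down the identity; alternatively it can be obtained by functionally differentiating \eqref{lapocc:b<c} with respect to a localized perturbation of the killing rate.

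The main obstacle is the algebraic self-consistency of the definitions \eqref{defn:wzpq}: verifying that the candidates $W^{(p,q)}(\cdot,c),Z^{(p,q)}(\cdot,c)$ actually solve the coupled renewal equations derived from the strong Markov decomposition boils down to convolution identities among scale functions at two different rates, which become delicate precisely because $p\neq q$ and because the variables $x,y$ straddle $0$. The excursion-measure computation for \eqref{lapocc:bc} is a secondary difficulty, requiring one to separately track excursions above and excursions below $0$ and package their joint contribution into a single exponential rate.
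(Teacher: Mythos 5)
The paper does not prove Proposition \ref{prop:pq:ab}: it is quoted from \cite{Loeffen2014}, \cite{LiYZ2014} and, for \eqref{lapocc:bc}, from \cite{LiZ2019}, and the appendix then uses these identities as \emph{input} to establish the excursion identities of Proposition \ref{prop:npq:bc}. So there is no in-paper argument to compare yours against; judging the sketch on its own terms, it contains a genuine gap.

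The gap is in the core step for \eqref{lapocc:b<c}--\eqref{lapocc:c<b}: the renewal system obtained by splitting the path at level $0$ does not determine $g(x):=\E_{x}[\exp(-\mo(\tau_{b}^{+}));\tau_{b}^{+}\le\tau_{c}^{-}]$ when $0$ is regular for $(-\infty,0)$, i.e.\ exactly in the unbounded-variation case this paper assumes throughout. Concretely, absence of positive jumps gives $g(y)=\E_{y}[e^{-q\tau_{0}^{+}};\tau_{0}^{+}<\tau_{c}^{-}]\,g(0)$ for $y\in(c,0)$, so you cannot avoid evaluating at $0$ (creeping downward when $\sigma>0$ forces this as well); but the restart equation at $0$ reads $g(0)=W^{(p)}(0)/W^{(p)}(b)+\E_{0}[e^{-p\tau_{0}^{-}}g(X(\tau_{0}^{-}));\tau_{0}^{-}<\tau_{b}^{+}]$, and under regularity $W^{(p)}(0)=0$, $\tau_{0}^{-}=0$ and $X(\tau_{0}^{-})=0$ $\P_{0}$-a.s., so it collapses to $g(0)=g(0)$. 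The system therefore admits a one-parameter family of solutions indexed by $g(0)$, and ``direct substitution shows the candidate solves it'' verifies existence but not uniqueness. This is precisely why the cited proofs either establish the identities first for bounded-variation paths (where $0$ is irregular for $(-\infty,0)$ and the decomposition closes) and pass to the limit, or inject additional information such as the excursion entrance law. Two secondary problems: your route to \eqref{lapocc:bc} requires the normalizations $n(\kappa_{b}^{+}<\zeta)=1/W(b)$ and its analogue below $0$, facts of the same depth as the target (and the paper's logic runs in the opposite direction, deriving the $n$-identities from \eqref{lapocc:bc}); and the Dirichlet-problem/uniqueness argument for \eqref{lapocc:resl} presupposes smoothness of $W^{(q)}$ and a uniqueness theorem for the associated integro-differential boundary problem, neither of which holds for a general SNLP --- the standard proof instead writes $e^{-\mo(t)}=e^{-pt}e^{(p-q)\mom(t)}$ and iterates the $p$-resolvent.
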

\begin{rmk}
The generalized scale functions
$W^{(p,q)}$ and $Z^{(p,q)}$ were first introduced in \cite{Loeffen2014} and further generalized in \cite{LiP2018} for the study of weighted occupation times of SNLP.
It follows from \eqref{defn:wzpq} that
\[
W^{(p,q)}(x,y)=0
\,\,\text{and}\,\,
Z^{(p,q)}(x,y)=1\,\,\text{ for}\,\, x<y\]
and for $x>0>y$
$$Z^{(p,q)}(x,0)=Z^{(p)}(x), \,\, Z^{(p,q)}(0,y)=Z^{(q)}(-y)\,\,\text{and}\,\, Z^{(p,0)}(x,y)=Z^{(p)}(x),$$
see \cite{LiZ2019} for more properties of the functions.
\end{rmk}

\subsection{L\'evy surplus processes with Parisian ruin}\label{sec:parisianSNLP}
The Parisian ruin time with duration $\gamma>0$ for a surplus process $X$ is defined as
\[
\tau_{\gamma}:=\inf\{t>\gamma: X(t-s)<0\,\, \text{for all}\,\, s\in[0,\gamma]\},
\]
which is the first moment when the process has continuously stayed below $0$ for a period of length $\gamma$.
Making use of Kendall's identity, \cite{Loeffen2013,Loeffen2018} found nice compact expressions for the Parisian problem for SNLP $X$
by introducing the following auxiliary function
\[
\Lambda^{(p)}(x,t):=\int_{0}^{\infty}W^{(p)}(x+z)\frac{z}{t}\P\big(X(t)\in dz\big),
\]
which satisfies for $s>0$ and $x\in\mathbb{R}$,
\begin{equation}
\label{ken:w}
\int_{0}^{\infty}e^{-st}e^{-pt}\Lambda^{(p)}(x,t)\,dt=\int_{0}^{\infty}e^{-\phi(p+s)z}W^{(p)}(z+x)\,dz.
\end{equation}

The following Laplace transforms and potential measures associated to the Parisian ruin time were obtained in \cite[Theorem 3.1, Corollary 3.2 and Proposition 3.4]{Loeffen2018}.
\begin{prop}\label{prop:ro18}
For $b,p>0$ and $x\le b$ we have
\begin{gather}
\E_{x}\Big[e^{-p\tau_{b}^{+}}; \tau_{b}^{+}<\tau_{\gamma}\Big]=\frac{\Lambda^{(p)}(x,\gamma)}{\Lambda^{(p)}(b,\gamma)},\label{prop:ro18:b<r}\\
\begin{aligned}
\E_{x}\Big[e^{-p(\tau_{\gamma}-\gamma)}; \tau_{\gamma}<\tau_{b}^{+}\Big]
=&\ \Big(Z^{(p)}(x)+p \int_{0}^{\gamma}\Lambda^{(p)}(x,s)\,ds\Big)\\
&\quad -\frac{\Lambda^{(p)}(x,\gamma)}{\Lambda^{(p)}(b,\gamma)}\Big(Z^{(p)}(b)+ p\int_{0}^{\gamma}\Lambda^{(p)}(b,s)\,ds\Big).
\end{aligned}\label{prop:ro18:r<b}
\end{gather}
Moreover, for $y\ge0$ we have
\begin{equation}\label{prop:ro18:resl}
\int_{0}^{\infty}e^{-pt}\P_{x}\big(X_{t}\in dy, t<\tau_{b}^{+}\wedge\tau_{\gamma}\big)\,dt
=\Big(\frac{\Lambda^{(p)}(x,\gamma)}{\Lambda^{(p)}(b,\gamma)}W^{(p)}(b-y)-W^{(p)}(x-y)\Big)\,dy.
\end{equation}
\end{prop}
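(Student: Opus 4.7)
The plan is to derive the three formulas of Proposition \ref{prop:ro18} via It\^o excursion theory for the SNLP $X$, rather than by the approximation-by-exponentials argument of \cite{Loeffen2018}. The first step is to extract, from Proposition \ref{prop:pq:ab}, joint Laplace transforms for the positive-side and negative-side durations $\zeta^+,\zeta^-$ of a generic excursion under the excursion measure $n$; these follow by differentiating \eqref{lapocc:bc} in the local-time variable $t$ at $t=0$, yielding excursion-measure integrals expressed through the combination $W^{(p,q)}(b,c)/\bigl(W^{(p)}(b)W^{(q)}(-c)\bigr)$ and its marginal variants in $(b,c)$. Once this library of excursion-level Laplace transforms is in place, each of the events appearing in Proposition \ref{prop:ro18} can be read off the Poisson point process of excursions.

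Concretely, $\{\tau_b^+<\tau_\gamma\}$ is precisely the event that no excursion with $\zeta^->\gamma$ arrives before the first upcrossing of $b$, and on the complementary event $\{\tau_\gamma<\tau_b^+\}$ the first such ``bad'' excursion contributes an extra real-time segment of length $\gamma$, accounting for the $e^{-p\gamma}$ factor. By the compensation formula and the exponential-arrival structure of the excursion point process, both expectations collapse to expressions of the form $f_p(x,\gamma)/f_p(b,\gamma)$ or a suitable difference, where $f_p(u,\gamma)$ stands for an excursion-measure integral of the type $n\bigl(e^{-p\zeta^+}\mathbf{1}(\zeta^->\gamma);\,\text{level constraint encoding the starting height }u\bigr)$. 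Kendall's identity \eqref{ken:w} then identifies $f_p(u,\gamma)$ with $\Lambda^{(p)}(u,\gamma)$ up to a normalization that cancels in the ratio, which yields \eqref{prop:ro18:b<r}, while the algebra of the ``bad-excursion'' contribution, combined with the $Z^{(p)}$ terms arising from the internal structure of that excursion, produces \eqref{prop:ro18:r<b}. For the potential measure \eqref{prop:ro18:resl}, I would insert the indicator $\mathbf{1}(X_t\in dy)$ inside the same decomposition; for $y\ge0$ only the portions of the path outside excursions below $0$ contribute, so \eqref{lapocc:resl} with $q=0$ supplies the standard resolvent kernel $W^{(p)}(x-y)$, modulated by the same excursion-absence factor $\Lambda^{(p)}(x,\gamma)/\Lambda^{(p)}(b,\gamma)$ as in \eqref{prop:ro18:b<r}.

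The main obstacle is the clean identification $f_p(u,\gamma)=c\,\Lambda^{(p)}(u,\gamma)$ at the level of the excursion measure. The Laplace transform in $\gamma$ of $f_p(u,\gamma)$ is produced naturally by passing to a suitable limit in the generalized scale function $W^{(p,q)}$, yielding the expression $\int_{0}^{\infty}e^{-\phi(p+s)z}W^{(p)}(z+u)\,dz$, which is exactly the right-hand side of \eqref{ken:w}. Inversion in the $s\mapsto\gamma$ variable then produces $\Lambda^{(p)}(u,\gamma)$, and pinning down the correct multiplicative constant requires careful tracking of the local-time normalization used throughout Proposition \ref{prop:pq:ab}. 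Once this identification is secured, the three formulas of Proposition \ref{prop:ro18} follow from the excursion decompositions described above, and the only remaining step is the routine bookkeeping for the starting cases $x<0$, $0\le x\le b$, and for verifying continuity at $x=0$.
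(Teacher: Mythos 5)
Your plan follows essentially the same route as the paper: you extract the generic-excursion Laplace transforms from the weighted occupation time identity \eqref{lapocc:bc} (the content of Proposition \ref{prop:npq:bc}), decompose the Parisian events over the excursion point process via the compensation formula (Theorems \ref{thm:1} and \ref{thm:resl}), and identify the resulting excursion functional with $\Lambda^{(p)}$ by taking the limit $c\to-\infty$ in the generalized scale functions and invoking Kendall's identity \eqref{ken:w}, exactly as in Section \ref{recover}. The outline is correct and all the key steps match the paper's argument; what remains is the explicit bookkeeping (the functions $H^{(p)},J^{(p)},K^{(p)}$ and the limits \eqref{limit:H}--\eqref{limit:J}) that the paper carries out in detail.
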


\begin{rmk}\label{rmk_markov}
The joint Laplace transform of $X$ at $\tau_{\gamma}$ was also found in \cite[Theorem 3.1]{Loeffen2018}, which is essentially equivalent to \eqref{prop:ro18:r<b} by change of measure. So is the potential measure in \cite[Proposition 3.4]{Loeffen2018},
where expression for the potential measure is only found for $y\in (0,\infty)$. 
Note that $X(\cdot\wedge\tau_{\gamma})$ does not have the Markov property, whereas the two-dimensional process $(X,l)(\cdot\wedge\tau_{\gamma})$ does, where $l(t)$ denotes the length of negative duration of $X$ right before $t$, i.e.
\[
l(t):=\sup\{r>0: X(t-s)<0, \forall s\in[0,r]\},
\]
and $l(t):=0$ if $X(t)\ge0$. Therefore, it would be more convenient to state the results in terms of the two-dimensional process, and in particular, for $(x,t)$ with $x<0$ and $t>0$.
\end{rmk}

\subsection{Some excursion theory for SNLPs}
Different from the limiting arguments used in \cite{Loeffen2013,Loeffen2018} among others, in this paper
we employ the excursion theory for L\'evy processes where the \textit{compensation formula} for Poisson point process is frequently used; see e.g. \cite[Chapters O.5 and IV]{Bertoin96book}.
The excursion theory is at full strength if $0$ is regular for $X$, which is equivalent to the assumption that $X$ has sample paths of unbounded variation,
and is further equivalent to the condition that 
\begin{center}
either $\sigma>0$ or $\D\int_{0}^{1}x\Pi(dx)=\infty$.
\end{center}
Therefore, we always assume in this paper that $X$ has sample paths of unbounded variation,
and recall some notion and basic facts about excursion theory in this section.

Let $\mathcal{L}:=\{t>0: X(t)=0\}$ be the zero set of $X$, and $\bar{\mathcal{L}}$ be its closure.
Given that $0$ is regular for $X$, the local time process of $X$ at $0$ exists and can be defined as the occupation density process
\[
L(t):=\lim_{\varepsilon\to0+}\frac{1}{2\varepsilon}\int_{0}^{t}\mathbf{1}\big(| X(s)|<\varepsilon\big)\,ds, \,\, t\geq 0,
\]
which is a continuous additive functional that increases only on $\bar{\mathcal{L}}$,
see e.g. \cite[VI.1]{Bertoin96book}.
The time spent by $X$ at $0$ is a Lebesgue-null set, i.e. 
\begin{equation}\label{d=0}
\int_{0}^{t}\mathbf{1}\big(s\in\bar{\mathcal{L}}\big)\,ds=0,\quad\forall t>0.
\end{equation}
For $t>0$ define the inverse of $L$ by
\[
L^{-1}(t):=\inf\{s>0: L(s)>t\}\quad\text{and}\quad
L^{-1}(t-):=\inf\{s>0: L(s)\ge t\}.
\]

The excursion process of $X$ away from $0$ is the pieces of sample path of $X$,
which takes values in the space $\mathscr{E}$ of excursion functions together with an additional isolated point $\Delta$,
and is defined as
\[
\epsilon_{t}:=\{X(L^{-1}(t-)+s), s\in[0, L^{-1}(t)-L^{-1}(t-))\}=:\{\epsilon_{t}(s), s\in[0,\zeta_{t})\}
\]
for the local time index $t$ satisfying $L^{-1}(t-)<L^{-1}(t)$ and $\epsilon_{t}=\Delta$ otherwise,
where $\mathscr{E}$ is a collection of c\`adl\`ag trajectories stopped at its first time hitting $0$,
and $\zeta_{t}:=L^{-1}(t)-L^{-1}(t-)$ represents the lifetime of $\epsilon_{t}$.
It is well known that there are countably many $t>0$ such that $L^{-1}(t-)<L^{-1}(t)$,
and $\{\epsilon_{t}, 0\leq t\le L(\infty)\}$ is a Poisson point process (stopped at the first excursion with infinite lifetime),
which is characterised by the associated excursion measure $n(\cdot)$; see e.g. \cite[IV.10]{Bertoin96book}.


Given a generic excursion $\epsilon\equiv\{\epsilon(s), s\in [0,\zeta) \}\in\mathscr{E}$, its total amount of time above $0$ and below $0$ are defined, respectively, by
\[
\zeta^{+}:=\int_{0}^{\zeta}\mathbf{1}(\epsilon(s)>0)\,ds
\quad\text{and}\quad
\zeta^{-}:=\int_{0}^{\zeta}\mathbf{1}(\epsilon(s)<0)\,ds,
\]
 and its first passage times are defined by
\[
\kappa_{x}^{+}:=\inf\{s<\zeta, \epsilon(s)>x\}
\quad\text{and}\quad
\kappa_{x}^{-}:=\inf\{s<\zeta, \epsilon(s)<x\}
\]
with the convention $\inf\emptyset=\infty$.
We use subscripts $t$, for example, $\kappa_{c,t}^{-}$ and $\kappa_{b,t}^{+}$ to emphasize the association with excursion $\epsilon_{t}\in\mathscr{E}$.

The following excursion theory results, which could also be derived from \cite{Pardo2018}, are proved in appendix for completeness.
\begin{prop}\label{prop:npq:bc}
For $b>0>c$ and $p,q>0$, we have
\begin{gather}
n\big(1-e^{-p\zeta^{+}-q\zeta^{-}}\mathbf{1}\big(\zeta<\kappa_{b}^{+}\wedge\kappa_{c}^{-}\big)\big)
= \frac{W^{(p,q)}(b,c)}{W^{(p)}(b)W^{(q)}(-c)},\label{n:pq:bc}\\
n\big(e^{-p\kappa_{b}^{+}}; \kappa_{b}^{+}<\zeta\big)
=\frac{1}{W^{(p)}(b)},\label{n:pq:b<c}\\
n\big(e^{-p\zeta^{+}-q(\kappa_{c}^{-}-\zeta^{+})}; \kappa_{c}^{-}<\zeta\wedge\kappa_{b}^{+}\big)
= \frac{Z^{(q)}(-c)W^{(p,q)}(b,c)-Z^{(p,q)}(b,c)W^{(q)}(-c)}{W^{(p)}(b)W^{(q)}(-c)}.\label{n:pq:c<b}
\end{gather}
Define for $t>0$
\[
\eta^{(p,q)}(\epsilon,t):=\int_{0}^{t\wedge\zeta}\big(p\mathbf{1}(\epsilon(s)>0)+q\mathbf{1}(\epsilon(s)<0)\big)\,ds.
\]
as the weighted occupation time up to $t$ for excursion $\epsilon$.
Then for any bounded measurable function $f$ on $\mathbb{R}$, we have
\begin{equation}\label{n:pq:resl}
\begin{aligned}
n\Big(\int_{0}^{\zeta\wedge\kappa_{b}^{+}\wedge\kappa_{c}^{-}}e^{-\eta^{(p,q)}(\epsilon,t)}f(\epsilon(t))\,dt\Big)
=\int_{c}^{b}f(y)\Big(\frac{W^{(p,q)}(b,y)}{W^{(p)}(b)}-\frac{W^{(p,q)}(b,c)W^{(q)}(-y)}{W^{(p)}(b)W^{(q)}(-c)}\Big)\,dy.
\end{aligned}
\end{equation}
\end{prop}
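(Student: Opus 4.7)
All four identities should follow from Proposition~\ref{prop:pq:ab} by excursion theory, with the exponential formula and the compensation formula for the Poisson point process $\{\epsilon_t\}$ doing essentially all the work. The bridge between the process-level identities of Proposition~\ref{prop:pq:ab} and their excursion-measure counterparts here is the pathwise identity
\[\mo\bigl(L^{-1}(t)\bigr)=\sum_{s\le t}\bigl(p\zeta_s^{+}+q\zeta_s^{-}\bigr),\]
valid by \eqref{d=0}, together with the event identity $\{L^{-1}(t)<\tau_b^{+}\wedge\tau_c^{-}\}=\bigcap_{s\le t}\{\zeta_s<\kappa_{b,s}^{+}\wedge\kappa_{c,s}^{-}\}$, so that both sides of \eqref{lapocc:bc} admit a clean excursion-theoretic reading.

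For \eqref{n:pq:bc} the exponential formula for the Poisson point process of excursions yields
\[\E\Bigl[\exp\bigl(-\mo(L^{-1}(t))\bigr);\,L^{-1}(t)<\tau_b^{+}\wedge\tau_c^{-}\Bigr]=\exp\Bigl(-t\,n\bigl(1-e^{-p\zeta^{+}-q\zeta^{-}}\mathbf{1}(\zeta<\kappa_b^{+}\wedge\kappa_c^{-})\bigr)\Bigr),\]
and comparing with \eqref{lapocc:bc} isolates $n(\cdot)$. For the remaining three identities I would run the compensation formula in the reverse direction. Decomposing $\tau_b^{+}=L^{-1}(\hat T_b-)+\kappa_{b,\hat T_b}^{+}$, where $\hat T_b$ denotes the first local-time index whose excursion satisfies $\kappa_b^{+}<\zeta$, and applying the compensation formula gives
\[\E_0\bigl[e^{-p\tau_b^{+}}\bigr]=\Bigl(\int_0^{\infty}\E\bigl[e^{-pL^{-1}(t-)};\,L^{-1}(t)<\tau_b^{+}\bigr]\,dt\Bigr)\cdot n\bigl(e^{-p\kappa_b^{+}};\,\kappa_b^{+}<\zeta\bigr).\]
The inner factor is obtained by setting $q=p$ in \eqref{lapocc:bc} and sending $c\to-\infty$; using the asymptotic $e^{-\phi(p)x}W^{(p)}(x)\uparrow\phi'(p)$ from \eqref{wz-limit} one gets $W^{(p)}(b-c)/\bigl(W^{(p)}(b)W^{(p)}(-c)\bigr)\to e^{\phi(p)b}/W^{(p)}(b)$, so the outer integral evaluates to $W^{(p)}(b)e^{-\phi(p)b}$; equating with the classical $\E_0[e^{-p\tau_b^{+}}]=e^{-\phi(p)b}$ then produces \eqref{n:pq:b<c}. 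Identities \eqref{n:pq:c<b} and \eqref{n:pq:resl} are handled in exactly the same fashion, starting from \eqref{lapocc:c<b} and \eqref{lapocc:resl} at $x=0$ and invoking the boundary evaluations $W^{(p,q)}(0,y)=W^{(q)}(-y)$, $Z^{(p,q)}(0,y)=Z^{(q)}(-y)$ recorded in the remark following Proposition~\ref{prop:pq:ab}.

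I do not expect a genuinely hard step: the only analytic input beyond the exponential/compensation formulae is the limit $c\to-\infty$ used in the derivation of \eqref{n:pq:b<c}. The main obstacle will be bookkeeping: one must verify the compensation-formula applications rigorously (by indexing the Poisson point process by local time $t$ rather than real time, and making sure the ``pre-exit'' factor and the ``critical-excursion'' factor are conditionally independent), and then carry out the algebraic rearrangement that isolates $n(\cdot)$ cleanly on one side of each identity without sign errors.
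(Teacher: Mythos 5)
Your proposal matches the paper's proof in all essentials: \eqref{n:pq:bc} is obtained from the exponential formula for the excursion Poisson point process compared against \eqref{lapocc:bc}, and the remaining identities come from the compensation formula applied to the first excursion realizing the relevant event, with the process-level formulas of Proposition~\ref{prop:pq:ab} evaluated at $x=0$ (via $W^{(p,q)}(0,c)=W^{(q)}(-c)$ and $Z^{(p,q)}(0,c)=Z^{(q)}(-c)$) supplying the left-hand sides and \eqref{n:pq:bc} supplying the common denominator. The two places where you deviate are both minor and both correct: for \eqref{n:pq:b<c} you send $c\to-\infty$ in \eqref{lapocc:bc} with $q=p$ and match against $\E_0[e^{-p\tau_b^+}]=e^{-\phi(p)b}$, which needs the asymptotic \eqref{wz-limit}, whereas the paper keeps $c$ finite and simply divides $\E_0[\exp(-\mathcal{O}^{(p,q)}(\tau_b^+));\,\tau_b^+\le\tau_c^-]=W^{(q)}(-c)/W^{(p,q)}(b,c)$ by the already-known denominator; and for \eqref{n:pq:resl} you apply the compensation formula directly to the sum over excursions of the within-excursion integrals, whereas the paper routes the argument through an independent exponential time $e_p$ and the memoryless property --- your direct route is, if anything, cleaner, provided you record (as the paper does via $\kappa_0^-=\zeta^+$ and the absence of positive jumps) that the occupation weight accumulated inside the critical excursion up to $\kappa_c^-$ is exactly $p\zeta^++q(\kappa_c^--\zeta^+)$, which is what produces the exponent in \eqref{n:pq:c<b}.
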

\begin{rmk}
 Observe from \eqref{n:pq:bc} that, on the set $\{\zeta<\infty\}$ and under the excursion measure $n$, the joint length-height distribution of the positive part of the excursion has the same law as the length-depth distribution of the negative part, i.e.
\[\begin{aligned}
&\ n\big(\zeta^{+}\in dt,\zeta^{-}\in ds, \sup_{r<\zeta}\epsilon(r)\in du, \big|\inf_{r<\zeta}\epsilon(r)\big| \in dv\big|\zeta<\infty\big)\\
=&\ n\big(\zeta^{+}\in ds,\zeta^{-}\in dt, \sup_{r<\zeta}\epsilon(r)\in dv, \big|\inf_{r<\zeta}\epsilon(r)\big| \in du\big|\zeta<\infty\big),
\end{aligned}\]
which could be explained by time-reversing process $X$.

Moreover, letting $p\to\infty$ in \eqref{n:pq:b<c} we have $n\big(\kappa_{b}^{+}=0\big)=0$ for every $b>0$.
Similarly, letting $p=q\to\infty$ in \eqref{n:pq:c<b} we have from \eqref{n:pq:bc}
\[\text{RHS}
=\frac{W^{(p)}(b-c)}{W^{(p)}(b)W^{(p)}(-c)}\E\big[e^{-p\tau_{c}^{-}}; \tau_{c}^{-}<\tau_{b}^{+}\big]
\le n\big(\zeta\ge \kappa_{b}^{+}\wedge \kappa_{c}^{-}\big)\cdot \E\big[e^{-p\tau_{c}^{-}}; \tau_{c}^{-}<\tau_{b}^{+}\big]\to0,
\]
that is, $n\big(\kappa_{c}^{-}=0<\zeta\wedge\kappa_{b}^{+}\big)=0$, which implies $n\big(\epsilon(0+)\neq0\big)=0$ as expected.
\end{rmk}
\begin{rmk}\label{rmk:excr}
Since an excursion is a piece of sample path for $X$ stopped at the first time of hitting $0$,
it holds that $\kappa_{0}^{-}=\zeta^{+}$ for lack of positive jumps; in addition, for $b>0>c$,
\[\begin{gathered}
\{\kappa_{b}^{+}<\zeta\}=\{\kappa_{b}^{+}<\infty\},\quad
\{\kappa_{c}^{-}<\zeta\}=\{\kappa_{c}^{-}<\infty\},\\
\{\zeta\le\kappa_{b}^{+}\}\cap\{\kappa_{c}^{-}<\zeta\}=\{\kappa_{c}^{-}<\zeta\wedge\kappa_{b}^{+}\},\\
\{\zeta\le\kappa_{b}^{+}\wedge\kappa_{c}^{-}\}
=\{\zeta<\kappa_{b}^{+}\wedge\kappa_{c}^{-}\}
\subset\{\zeta<\infty\}.
\end{gathered}\]
However, events $\{\kappa_{b}^{+}<\zeta=\infty\}, \{\zeta<\kappa_{b}^{+}=\infty\}$ and $\{\zeta=\kappa_{b}^{+}=\infty\}$ may not be empty events. Thus, we do not necessarily have $\{\zeta\le\kappa_{b}^{+}\}=\{\zeta<\kappa_{b}^{+}\}$, and so is the case for $\tau_{c}^{-}$.
\end{rmk}
\begin{rmk}
Although $\epsilon$ under $n$ is not a Markov process, it has the so called ``simple Markov property'' \cite[IV.4]{Bertoin96book}, that is, conditional on $\{\epsilon(t)=x, t<\zeta\}$, the shifted process $\{\epsilon(t+s), 0\le s<\zeta-t\}$ is independent of $\{\epsilon(s), 0\le s\le t\}$ and is distributed as $\{X(s), 0<s<\tau^{\{0\}}\}$ under $\P_{x}$, where $\tau^{\{0\}}:=\inf\{t>0, X(t)=0\}$ is the first hitting time of $0$ by $X$.
Moreover, it is often more convenient to work with the simple Markov property at the first passage times. For example, by \eqref{n:pq:b<c} we have for $0<x<b$,
\[
\frac{n\big(e^{-p \kappa_{b}^{+}}; \kappa_{b}^{+}<\zeta\big)}
{n\big(e^{-p \kappa_{x}^{+}}; \kappa_{x}^{+}<\zeta\big)}
=\E_{x}\Big[e^{-p\tau_{b}^{+}}; \tau_{b}^{+}<\tau_{0}^{-}\Big]
=\E_{x}\Big[e^{-p\tau_{b}^{+}}; \tau_{b}^{+}<\tau^{\{0\}}\Big].
\]
\end{rmk}

\section{Main results}\label{Main}
We first express Laplace transforms of the first passage times under $\P$ via excursion measure.
\begin{thm}\label{thm:1}
For $b>0>c, p>0$ and $r\in[0,\gamma]$, we have
\begin{align}
\label{eqn:n:r<bc}
\E\Big[e^{-p(\tau_{\gamma}-\gamma)}; \tau_{\gamma}<\tau_{b}^{+}\wedge\tau_{c}^{-}\Big]
=&\ \frac{n\big(e^{-p\zeta^{+}}; \zeta\le\kappa_{b}^{+}, \gamma<\zeta^{-}\wedge(\kappa_{c}^{-}-\zeta^{+})\big)}
{n\big(1-e^{-p\zeta}\mathbf{1}(\zeta<\kappa_{b}^{+}\wedge\kappa_{c}^{-},\zeta^{-}\le\gamma)\big)},\\
\label{eqn:n:b<cr}
\E\Big[e^{-p\tau_{b}^{+}}; \tau_{b}^{+}<\tau_{c}^{-}\wedge\tau_{\gamma}\Big]
=&\ \frac{n(e^{-p\kappa_{b}^{+}}; \kappa_{b}^{+}<\zeta)}
{n\big(1-e^{-p\zeta}\mathbf{1}(\zeta<\kappa_{b}^{+}\wedge\kappa_{c}^{-},\zeta^{-}\le\gamma)\big)},\\
\label{eqn:n:c<br}
\E\Big[e^{-p\tau_{c}^{-}}; \tau_{c}^{-}<\tau_{b}^{+}\wedge\tau_{\gamma}, l(\tau_{c}^{-})\le r\Big]
=&\ \frac{n\big(e^{-p\kappa_{c}^{-}};\kappa_{c}^{-}<\zeta\wedge\kappa_{b}^{+},\kappa_{c}^{-}-\zeta^{+}\le r\big)}
{n\big(1-e^{-p\zeta}\mathbf{1}(\zeta<\kappa_{b}^{+}\wedge\kappa_{c}^{-},\zeta^{-}\le\gamma)\big)}.
\end{align}
\end{thm}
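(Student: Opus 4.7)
My plan is to exploit the Poisson point process (PPP) structure of the excursions of $X$ away from $0$ and reduce each of \eqref{eqn:n:r<bc}--\eqref{eqn:n:c<br} to a single application of the exponential formula. The key observation is that the first passage time $T:=\tau_{b}^{+}\wedge\tau_{c}^{-}\wedge\tau_{\gamma}$ is triggered precisely by the \emph{first unsafe excursion}, that is, the first $\epsilon_{s}$ which either leaves the strip $(c,b)$ or whose negative part lasts longer than $\gamma$. Accordingly, set
\[
A:=\{\zeta<\kappa_{b}^{+}\wedge\kappa_{c}^{-},\ \zeta^{-}\le\gamma\}\subset\mathscr{E},\qquad \sigma:=\inf\{s>0:\epsilon_{s}\in A^{c}\}.
\]

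Since $X$ spends zero Lebesgue time at $0$ by \eqref{d=0}, the total real time elapsed before $\epsilon_{\sigma}$ starts equals $L^{-1}(\sigma-)=\sum_{s<\sigma}\zeta(\epsilon_{s})$. Using $\kappa_{0}^{-}=\zeta^{+}$ from Remark~2.6 and the fact that each excursion begins on the positive side, I would verify that the three events in \eqref{eqn:n:r<bc}--\eqref{eqn:n:c<br} translate to disjoint subsets of $A^{c}$ at the single excursion $\epsilon_{\sigma}$: $\{\tau_{b}^{+}<\tau_{c}^{-}\wedge\tau_{\gamma}\}$ becomes $\{\kappa_{b}^{+}(\epsilon_{\sigma})<\zeta(\epsilon_{\sigma})\}$ with $\tau_{b}^{+}=L^{-1}(\sigma-)+\kappa_{b}^{+}(\epsilon_{\sigma})$; $\{\tau_{c}^{-}<\tau_{b}^{+}\wedge\tau_{\gamma},\ l(\tau_{c}^{-})\le r\}$ becomes $\{\kappa_{c}^{-}<\zeta\wedge\kappa_{b}^{+},\ \kappa_{c}^{-}-\zeta^{+}\le r\}$ with $\tau_{c}^{-}=L^{-1}(\sigma-)+\kappa_{c}^{-}(\epsilon_{\sigma})$; and $\{\tau_{\gamma}<\tau_{b}^{+}\wedge\tau_{c}^{-}\}$ becomes $\{\zeta\le\kappa_{b}^{+},\ \gamma<\zeta^{-}\wedge(\kappa_{c}^{-}-\zeta^{+})\}$ with $\tau_{\gamma}-\gamma=L^{-1}(\sigma-)+\zeta^{+}(\epsilon_{\sigma})$. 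Crucially, $l(t)$ resets to zero at the end of each excursion, so the Parisian clock only counts within a single excursion, and the constraint $\zeta^{-}\le\gamma$ in $A$ is exactly what prevents Parisian ruin during a safe excursion.

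The analytic step is then short. By the PPP structure of $(\epsilon_{s})$ with intensity $n$, the variable $\sigma$ is exponential with rate $n(A^{c})$, independent of $\epsilon_{\sigma}\sim n|_{A^{c}}/n(A^{c})$, and the excursions of index $s<\sigma$ form a PPP on $[0,\sigma)$ with intensity $n|_{A}$. The exponential formula yields $\E[\exp(-p\sum_{u<s}\zeta(\epsilon_{u}))]=\exp(-s\,n((1-e^{-p\zeta})\mathbf{1}_{A}))$, so that for any bounded measurable $F:\mathscr{E}\to\RR$ supported in $A^{c}$,
\[
\E\Big[e^{-pL^{-1}(\sigma-)}F(\epsilon_{\sigma})\mathbf{1}(\sigma<\infty)\Big]=\int_{0}^{\infty}n(A^{c})e^{-sn(A^{c})}e^{-sn((1-e^{-p\zeta})\mathbf{1}_{A})}\frac{n(F)}{n(A^{c})}\,ds=\frac{n(F)}{n(1-e^{-p\zeta}\mathbf{1}_{A})},
\]
which is precisely the denominator common to all three identities. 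Plugging in $F=e^{-p\zeta^{+}}\mathbf{1}\{\zeta\le\kappa_{b}^{+},\ \gamma<\zeta^{-}\wedge(\kappa_{c}^{-}-\zeta^{+})\}$ recovers \eqref{eqn:n:r<bc}; $F=e^{-p\kappa_{b}^{+}}\mathbf{1}\{\kappa_{b}^{+}<\zeta\}$ recovers \eqref{eqn:n:b<cr}; and $F=e^{-p\kappa_{c}^{-}}\mathbf{1}\{\kappa_{c}^{-}<\zeta\wedge\kappa_{b}^{+},\ \kappa_{c}^{-}-\zeta^{+}\le r\}$ recovers \eqref{eqn:n:c<br}.

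The main obstacle is the bookkeeping in the middle step rather than the analytic computation: one must argue carefully that no safe excursion ever contributes to any of the three exits, that the three cases above are disjoint, and that together they jointly cover $\{T<\infty\}$ modulo $n$-null boundary events (such as $\{\zeta=\kappa_{b}^{+}\}$ or $\{\zeta^{-}=\gamma\}$). Once this identification is in place, the exponential formula above delivers all three formulas at a single stroke.
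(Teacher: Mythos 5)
Your proposal is correct and follows essentially the same route as the paper: both decompose the path at the first ``unsafe'' excursion (the first point of the excursion PPP landing in $A^{c}$ with $A=\{\zeta<\kappa_{b}^{+}\wedge\kappa_{c}^{-},\,\zeta^{-}\le\gamma\}$), translate the three events into excursion-level events using $\kappa_{0}^{-}=\zeta^{+}$ and the absence of positive jumps, and identify the common denominator $n\big(1-e^{-p\zeta}\mathbf{1}_{A}\big)$ as the rate of an exponential first-entrance time. The only difference is cosmetic: the paper enlarges the point process with independent exponential marks $e_{p,s}$ and invokes the compensation formula, whereas you use the splitting property of the PPP together with Campbell's exponential formula to evaluate $\E\big[e^{-pL^{-1}(\sigma-)}F(\epsilon_{\sigma})\big]$ directly --- an equivalent computation yielding the same factorization.
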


In the proof, we enlarge the probability space so that $\{(\epsilon_{s},e_{p,s})\}_{s>0}$ is a Poisson point process on $\mathscr{E}\times(0,\infty)$ with characteristic measure
\begin{equation}\label{n:enlarge}
n_{e}\big(\epsilon\in B, e_{p}>t\big):=n(B)\cdot e^{-p t}\quad\forall B\in\mathscr{E}, t>0,
\end{equation}
for some $p>0$, in which given $\epsilon_{s}\neq \Delta $, $e_{p,s}$ is an independent exponential random variable with rate $p$. 
\begin{proof}
Being absent of positive jumps, for the excursion that the Parisian ruin happens, say $\epsilon_{t}$,
we must have
$$\zeta^{+}_{t}<\infty,\quad \zeta^{-}_{t}>\gamma,\quad \tau_{\gamma}-\gamma=L^{-1}(t-)+\zeta^{+}_{t}.$$
Therefore, we have from \eqref{d=0} that for such $t>0$
\begin{equation}
\label{iden:n:2}
\begin{aligned}
\tau_{\gamma}=\int_{0}^{\tau_{\gamma}}\mathbf{1}\big(s\in\bar{\mathcal{L}}\big)\,ds
+\int_{0}^{\tau_{\gamma}}\mathbf{1}\big(s\notin\bar{\mathcal{L}}\big)\,ds
=\int_{0}^{\tau_{\gamma}}\mathbf{1}\big(s\notin\bar{\mathcal{L}}\big)\,ds
=(\zeta^{+}_{t}+\gamma)+\sum_{s<t}\zeta_{s},
\end{aligned}
\end{equation}
which says that the Parisian run time is the sum of the time spent by $\epsilon_{t}$ up to the occurrence of Parisian ruin and the life times of those excursions before the index $t$,
where the time spent on $\bar{\mathcal{L}}$ is negligible.
Notice that we may have $\{\zeta_{t}^{+}<\infty,\zeta_{t}=\infty\}$ for some $t>0$, and we must have $\zeta^{-}_{s}\le\gamma$ and $\zeta_{s}<\infty$ for those excursions before the index $t$.
Therefore,
\[\begin{aligned}
 e^{-p(\tau_{\gamma}-\gamma)}\mathbf{1}\big(\tau_{\gamma}<&\ \tau_{b}^{+}\wedge\tau_{c}^{-}\big)
=\sum_{t}\Big(e^{-p\zeta^{+}_{t}}\mathbf{1}\big(\zeta^{+}_{t}<\infty, \gamma<\zeta_{t}^{-},
\zeta_{t}\le\kappa_{b,t}^{+}, \gamma<(\kappa_{c,t}^{-}-\zeta^{+}_{t})\big)\Big)\\
&\quad \times \exp\Big(-p\sum_{s<t}\zeta_{s}\Big)\mathbf{1}\big(\zeta_{s}<\infty,\zeta^{-}_{s}\le\gamma,
\zeta_{s}<\kappa_{b,s}^{+}\wedge\kappa_{c,s}^{-}, \forall s<t\big),
\end{aligned}\]
where the last term is left continuous in $t$. Applying the compensation formula gives
\begin{equation}\label{iden:n:1}
\begin{aligned}
\E\Big[e^{-p(\tau_{\gamma}-\gamma)}; \tau_{\gamma}<\tau_{b}^{+}\wedge\tau_{c}^{-}\Big]
=&\ \E\bigg[\int_{0}^{\infty}
n_{e}\Big(\zeta^{+}<e_{p},\zeta\le\kappa_{b}^{+}, \gamma<\zeta^{-}\wedge(\kappa_{c}^{-}-\zeta^{+})\Big)\\
&\quad \times \mathbf{1}\Big(\zeta_{s}<\kappa_{b,s}^{+}\wedge\kappa_{c,s}^{-}\wedge e_{p,s},\zeta_{s}^{-}\le\gamma,\forall s<t\Big)\,dt\bigg]\\
=&\ n\big(e^{-p\zeta^{+}}; \zeta\le\kappa_{b}^{+}, \gamma<\zeta^{-}\wedge(\kappa_{c}^{-}-\zeta^{+})\big)\cdot\int_{0}^{\infty}\P\big(\kappa_{A^{c}}>t\big)\,dt
\end{aligned}
\end{equation}
where the Poisson point process $\{(\epsilon_{t},e_{p,t})\}_{t>0}$ is specified in \eqref{n:enlarge},
\[
A:=\{\zeta<\kappa_{b}^{+}\wedge\kappa_{c}^{-}\}\cap\{\zeta^{-}\le\gamma\}\cap\{\zeta<e_{p}\},
\]
and $\kappa_{A^{c}}:=\inf\{s>0: (\epsilon_{s},e_{p,s})\notin A\}$ is the first entrance time of $A^{c}$, which is exponentially distributed with parameter
 \[
 n_{e}\big(1-\mathbf{1}(A)\big)
 =n\big(1-e^{-p\zeta}\mathbf{1}(\zeta<\kappa_{b}^{+}\wedge\kappa_{c}^{-},\zeta^{-}\le \gamma)\big).
 \]
Plugging the above into \eqref{iden:n:1} gives \eqref{eqn:n:r<bc}.

The same procedure can be applied to $\tau_{b}^{+}$ and $\tau_{c}^{-}$, and we only highlight the differences. On the sets $\{\tau_{b}^{+}<\infty\}$ and $\{\tau_{c}^{-}<\infty\}$, similar to \eqref{iden:n:2} we have from \eqref{d=0}
\[\begin{aligned}
e^{-p\tau_{b}^{+}}\mathbf{1}\big(\tau_{b}^{+}<&\ \tau_{c}^{-}\wedge\tau_{\gamma}\big)
= \sum_{t}\Big(e^{-p\kappa_{b,t}^{+}}\mathbf{1}\big(\kappa_{b,t}^{+}<\zeta_{t}\big)\Big)\\
&\quad \times\exp\Big(-p\sum_{s<t}\zeta_{s}\Big)\mathbf{1}\big(\zeta_{s}<\infty,\zeta^{-}_{s}\le\gamma,\zeta_{s}<\kappa_{b,s}^{+}\wedge\kappa_{c,s}^{-}, \forall s<t\big),\\
e^{-p\tau_{c}^{-}}\mathbf{1}\big(\tau_{c}^{-}<&\ \tau_{b}^{+}\wedge\tau_{\gamma}, l(\tau_{c}^{-})\le r\big)
= \sum_{t}\Big(e^{-p\kappa_{c,t}^{-}}\mathbf{1}\big(\zeta^{+}_{t}<\infty, \kappa_{c}^{-}-\zeta_{t}^{+}\le r,\zeta_{t}\le\kappa_{b,t}^{+}\big)\Big)\\
&\quad \times\exp\Big(-p\sum_{s<t}\zeta_{s}\Big)\mathbf{1}\big(\zeta_{s}<\infty,\zeta^{-}_{s}\le\gamma,\zeta_{s}<\kappa_{b,s}^{+}\wedge\kappa_{c,s}^{-}, \forall s<t\big),
\end{aligned}\]
respectively, where note that 
$$l(\tau_{c}^{-})=(\kappa_{c,t}^{-}-\zeta^{+}_{t}) \,\,\,\text{for}\,\,\, \tau_{c}^{-}<\infty$$ 
and 
$$\{\kappa_{c}^{-}<\infty\}\cap\{\zeta\le\kappa_{b}^{+}\}=\{\kappa_{c}^{-}<\zeta\wedge\kappa_{b}^{+}\}.$$
Then applying the same procedure as in \eqref{iden:n:1} and the compensation formula we can prove \eqref{eqn:n:b<cr} and \eqref{eqn:n:c<br}, respectively.
\end{proof}
\begin{rmk}
One can find that the formulas in Theorem \ref{thm:1} share the common denominator,
and the numerators describe what happen in the last excursion.
Given the weighed occupation times in Proposition \ref{prop:npq:bc}, one can also derive the weighed occupation times for the Parisian model under excursion measure $n$ by simply replacing $p\zeta$ with $\eta^{(p,q)}(\epsilon,\zeta)$. Moreover, the excursion method works for more general Markov models, the identities in the theorem also holds for reflected L\'evy process for example, in which the problem left is to characterise the excursion identities.
\end{rmk}

In the following, we present the Laplace transforms of the excursion identities in \eqref{eqn:n:r<bc}, \eqref{eqn:n:b<cr} and \eqref{eqn:n:c<br} by making use of Proposition \ref{prop:npq:bc}, where
the numerator in \eqref{eqn:n:b<cr} is already given by \eqref{n:pq:b<c}.
\begin{prop}\label{prop:cals}
For $b>0>c$ and $p,q>0$, we have
\begin{equation}
\int_{0}^{\infty}e^{-qt}n\Big(1-e^{-p\zeta}\mathbf{1}(\zeta<\kappa_{b}^{+}\wedge\kappa_{c}^{-}, \zeta^{-}\le t)\Big)\,dt=\frac{W^{(p,p+q)}(b,c)}{q\cdot W^{(p)}(b)W^{(p+q)}(-c)}\label{fun:1}.
\end{equation}
In addition,
\begin{align}
&\ \int_{0}^{\infty}e^{-qt}e^{-pt}n\Big(e^{-p\zeta^{+}}; \zeta\le \kappa_{b}^{+}, t<\zeta^{-}\wedge(\kappa_{c}^{-}-\zeta^{+})\Big)\,dt\non\\
=&\ \frac{1}{p+q}\Big(\frac{Z^{(p,p+q)}(b,c)-Z^{(p)}(b)}{W^{(p)}(b)}+\frac{W^{(p,p+q)}(b,c)(1-Z^{(p+q)}(-c))}{W^{(p)}(b)W^{(p+q)}(-c)}\Big),\label{fun:3}\\
&\ \int_{0}^{\infty}e^{-qt}n\Big(e^{-p\kappa_{c}^{-}}; \kappa_{c}^{-}<\zeta\wedge\kappa_{b}^{+}, \kappa_{c}^{-}-\zeta^{+}\le t\Big)\,dt\non\\
=&\ \frac{1}{q}\Big(\frac{Z^{(p+q)}(-c)W^{(p,p+q)}(b,c)}{W^{(p)}(b)W^{(p+q)}(-c)}
-\frac{Z^{(p,p+q)}(b,c)}{W^{(p)}(b)}\Big).\label{fun:4}
\end{align}
\end{prop}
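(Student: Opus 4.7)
The plan is to handle all three identities uniformly by applying Fubini--Tonelli (valid since every integrand is non-negative) to the outer integral in $t$, turning the extra factor $e^{-qt}$ into an exponential weight on the excursion functional, after which Proposition \ref{prop:npq:bc} applies.

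For \eqref{fun:1}, I would use $\int_0^\infty e^{-qt}\mathbf{1}(\zeta^-\le t)\,dt = q^{-1}e^{-q\zeta^-}$ (adopting $e^{-\infty}:=0$), which rewrites the LHS as $q^{-1}\,n\bigl(1 - e^{-p\zeta - q\zeta^-}\mathbf{1}(\zeta<\kappa_b^+\wedge\kappa_c^-)\bigr)$. Since $p\zeta + q\zeta^- = p\zeta^+ + (p+q)\zeta^-$, this is exactly \eqref{n:pq:bc} with second rate $p+q$, delivering \eqref{fun:1}. An identical manoeuvre, now with $\kappa_c^- - \zeta^+$ replacing $\zeta^-$ inside the indicator, reduces \eqref{fun:4} to \eqref{n:pq:c<b} after the substitution $q\mapsto p+q$.

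The only delicate identity is \eqref{fun:3}. Fubini yields
\[
(p+q)\cdot\mathrm{LHS} = n\bigl(e^{-p\zeta^+}(1-e^{-(p+q)\alpha})\mathbf{1}(\zeta\le\kappa_b^+)\bigr), \qquad \alpha := \zeta^-\wedge(\kappa_c^- - \zeta^+).
\]
The event $\{\zeta=\kappa_b^+\}$ is empty since an excursion ends at $0\ne b$, so on $\{\zeta\le\kappa_b^+\}$ we have $\zeta\wedge\kappa_b^+\wedge\kappa_c^- = \zeta\wedge\kappa_c^-$. Using $\kappa_0^-=\zeta^+$ (Remark \ref{rmk:excr}) to locate the negative part on $[\zeta^+,\zeta)$, a short antiderivative check shows
\[
e^{-p\zeta^+}\bigl(1-e^{-(p+q)\alpha}\bigr)\mathbf{1}(\zeta\le\kappa_b^+) = \int_0^{\zeta\wedge\kappa_b^+\wedge\kappa_c^-}(p+q)\,e^{-\eta^{(p,p+q)}(\epsilon,t)}\mathbf{1}(\epsilon(t)<0)\,dt.
\]
Taking $n$ of both sides and invoking \eqref{n:pq:resl} with $(p,q)\mapsto(p,p+q)$ and $f(y)=(p+q)\mathbf{1}(y<0)$ reduces \eqref{fun:3} to evaluating
\[
(p+q)\int_c^0\Bigl[\frac{W^{(p,p+q)}(b,y)}{W^{(p)}(b)} - \frac{W^{(p,p+q)}(b,c)\,W^{(p+q)}(-y)}{W^{(p)}(b)\,W^{(p+q)}(-c)}\Bigr]dy.
\]

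The main obstacle will be integrating $W^{(p,p+q)}(b,y)$ in $y$ in closed form. My plan is to differentiate \eqref{defn:wzpq} in the second variable, which yields the clean relation $\partial_y Z^{(p,q)}(x,y) = -q\,W^{(p,q)}(x,y)$, hence $\int_c^0 W^{(p,p+q)}(b,y)\,dy = (p+q)^{-1}\bigl(Z^{(p,p+q)}(b,c) - Z^{(p,p+q)}(b,0)\bigr)$. Together with $Z^{(p,p+q)}(b,0)=Z^{(p)}(b)$ (noted in the remark following Proposition \ref{prop:pq:ab}) and $\int_c^0 W^{(p+q)}(-y)\,dy = (Z^{(p+q)}(-c)-1)/(p+q)$ (from \eqref{defn:zq}), this produces precisely \eqref{fun:3} after dividing through by $p+q$.
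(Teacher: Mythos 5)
Your argument is correct. For \eqref{fun:1} and \eqref{fun:4} it coincides with the paper's proof: the paper phrases the Fubini--Tonelli step as randomizing $t$ by an independent exponential $e_q$, which is exactly your computation $\int_0^\infty e^{-qt}\mathbf{1}(\beta\le t)\,dt=q^{-1}e^{-q\beta}$, and then both arguments land on \eqref{n:pq:bc} and \eqref{n:pq:c<b} with the second rate shifted to $p+q$. For \eqref{fun:3} the first step (Fubini, giving $(p+q)^{-1}n\big(e^{-p\zeta^+}(1-e^{-(p+q)\alpha});\zeta\le\kappa_b^+\big)$) is again the same, but from there you diverge: the paper splits the event $\{\zeta\le\kappa_b^+\}$ into $\{\zeta<\kappa_b^+\wedge\kappa_c^-\}$, where $\alpha=\zeta^-$, and $\{\kappa_c^-<\zeta\wedge\kappa_b^+\}$, where $\alpha=\kappa_c^--\zeta^+$, and reads off both pieces from \eqref{n:pq:bc} and \eqref{n:pq:c<b} (the latter also used at $q=0$); you instead rewrite the integrand pathwise as $(p+q)\int_0^{\zeta\wedge\kappa_b^+\wedge\kappa_c^-}e^{-\eta^{(p,p+q)}(\epsilon,t)}\mathbf{1}(\epsilon(t)<0)\,dt$ and feed it into the resolvent identity \eqref{n:pq:resl}, then integrate the scale functions explicitly via $\partial_yZ^{(p,q)}(x,y)=-q\,W^{(p,q)}(x,y)$. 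Both routes are legitimate; the paper's avoids any integration of $W^{(p,p+q)}(b,\cdot)$ and stays entirely at the level of the three exit identities, while yours has the small bonus of exhibiting the numerator of \eqref{fun:3} as a negative-occupation functional (and the antiderivative relation for $Z^{(p,q)}$ is worth recording). One small correction: your claim that $\{\zeta=\kappa_b^+\}$ is empty contradicts Remark \ref{rmk:excr}, which allows $\{\zeta=\kappa_b^+=\infty\}$ to be non-null. This does not damage the argument --- your pathwise identity still holds there, and the vanishing of the occupation integral on $\{\kappa_b^+<\zeta\}$ follows from $\kappa_b^+\le\zeta^+=\kappa_0^-$ (the excursion is nonnegative before $\kappa_b^+$), not from that emptiness claim --- but the justification should be stated that way.
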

\begin{proof}
Write $e_{q}$ for an exponential random variable independent of $X$.
We have
\[\begin{aligned}
&\ q\cdot \int_{0}^{\infty}e^{-qt}n\Big(1-e^{-p\zeta}\mathbf{1}\big(\zeta<\kappa_{b}^{+}\wedge\kappa_{c}^{-}, \zeta^{-}\le t\big)\Big)\,dt\\
=&\ \E\Big[n\Big(1-e^{-p\zeta}\mathbf{1}\big(\zeta<\kappa_{b}^{+}\wedge\kappa_{c}^{-}, \zeta^{-}\le e_{q}\big)\Big)\Big]
=n\Big(1-e^{-p\zeta-q\zeta^{-}}\mathbf{1}\big(\zeta<\kappa_{b}^{+}\wedge\kappa_{c}^{-}\big)\Big).
\end{aligned}\]
Then \eqref{fun:1} follows from \eqref{n:pq:bc}.

For the numerator in \eqref{eqn:n:r<bc}, we have
\[\begin{aligned}
&\ (p+q)\int_{0}^{\infty}e^{-qt} e^{-pt}n\big(e^{-p\zeta^{+}}; \zeta\le \kappa_{b}^{+}, t<\zeta^{-}\wedge(\kappa_{c}^{-}-\zeta^{+})\big)\,dt\non\\
=&\ \E\Big[n\Big(e^{-p\zeta^{+}}; \zeta\le \kappa_{b}^{+}, e_{p+q}<\zeta^{-}\wedge(\kappa_{c}^{-}-\zeta^{+})\Big)\Big]\non\\
=&\ n\Big(e^{-p\zeta^{+}}\big(1-e^{-(p+q)\zeta^{-}}\big); \zeta<\kappa_{b}^{+}\wedge\kappa_{c}^{-}\Big)
+n\Big(e^{-p\zeta^{+}}\big(1-e^{-(p+q)(\kappa_{c}^{-}-\zeta^{+})}\big); \kappa_{c}^{-}<\zeta\wedge\kappa_{b}^{+}\Big).
\end{aligned}\]
Then \eqref{fun:3} follows from \eqref{n:pq:bc} and \eqref{n:pq:c<b}.
For the numerator in \eqref{eqn:n:c<br}, we have
\[\begin{aligned}
&\ q\cdot \int_{0}^{\infty}e^{-qt}n\big(e^{-p\kappa_{c}^{-}}; \kappa_{c}^{-}<\zeta\wedge\kappa_{b}^{+}, \kappa_{c}^{-}-\zeta^{+}\le t\big)\,dt\\
=&\ \E\Big[n\big(e^{-p\kappa_{c}^{-}}; \kappa_{c}^{-}<\zeta\wedge\kappa_{b}^{+}, \kappa_{c}^{-}-\zeta^{+}\le e_{q}\big)\Big]
=n\big(e^{-p\kappa_{c}^{-}-q(\kappa_{c}^{-}-\zeta^{+})}; \kappa_{c}^{-}<\zeta\wedge\kappa_{b}^{+}\big),
\end{aligned}\]
and \eqref{fun:4} follows from \eqref{n:pq:c<b}. This completes the proof.
\end{proof}
\begin{rmk}
One may attempt to invert the Laplace transforms for the excursion quantities in Proposition \ref{prop:cals},
and find some explicit formulas for the passage problems in Theorem \ref{thm:1}
similarly to that in Proposition \ref{prop:ro18} of \cite{Loeffen2013,Loeffen2018}.
However, it is unlikely to find a general analytical expression at the presence the downward first passage time in the Parisian model. Taking the common denominator from \eqref{fun:1} in the case $p=0$ for example.

Letting $p\to0+$ in \eqref{fun:1}, applying \eqref{defn:wzpq} and \eqref{n:pq:b<c} we have
\begin{equation}\label{eqn:1}
\begin{aligned}
&\ \frac{W^{(0,q)}(b,c)}{q\cdot W^{(q)}(-c)}
= \frac{W(b-c)}{q\cdot W^{(q)}(-c)}+ \int_{c}^{0}W(b-z)\frac{W^{(q)}(z-c)}{W^{(q)}(-c)}\,dz\\
=&\ \frac{W(b-c)}{q}\cdot n\big(e^{-q\kappa_{-c}^{+}}; \kappa_{-c}^{+}<\zeta\big)+\int_{c}^{0}W(b-z)\E_{z}\big[e^{-q \tau_{0}^{+}}; \tau_{0}^{+}<\tau_{c}^{-}\big]\,dz\\
=&\ W(b)\cdot\int_{0}^{\infty}e^{-qt}n\big(1-\mathbf{1}(\zeta<\kappa_{b}^{+}\wedge\kappa_{c}^{-}, \zeta^{-}\le t)\big)dt.
\end{aligned}
\end{equation}
Thus, inverting the Laplace transform above and taking $q^{-1}$ as the Laplace transform of the identity function gives that for almost every $t>0$,
\[\begin{aligned}
&\ W(b)\cdot n\big(1-\mathbf{1}(\zeta<\kappa_{b}^{+}\wedge\kappa_{c}^{-}, \zeta^{-}\le t)\big)\\
=&\ W(b-c)\cdot n\big(\kappa_{-c}^{+}\le\zeta\wedge t\big)+\frac{\partial}{\partial t}\Big(\int_{c}^{0}W(b-z)\P_{z}\big(\tau_{0}^{+}<\tau_{c}^{-}\wedge t\big)\,dz\Big).
\end{aligned}\]
However, letting $c\to-\infty$ in \eqref{eqn:1}, we have
\[
\frac{W^{(0,q)}(b,c)}{q\cdot W^{(q)}(-c)}\to \int_{0}^{\infty}W(b+z)e^{-\phi(q)z}dz=\int_{0}^{\infty}e^{-q t}\Lambda(b,t)dt
\]
applying \eqref{ken:w}. Then by the continuity of the functions and inverting the Laplace transforms in \eqref{eqn:1} we have
\[
W(b)\cdot n\big(1-\mathbf{1}(\zeta<\kappa_{b}^{+}, \zeta^{-}\le t)\big)=\Lambda(b,t).
\]
More limits for the case $c=-\infty$ will be examined in section \ref{recover}, where results in \cite{Loeffen2013} and \cite{Loeffen2018} are recovered.
\end{rmk}

\begin{rmk}
There are some other interesting identities can be derived from Propositions \ref{prop:cals} and \ref{prop:pq:ab}.
For example, we have from \eqref{fun:1} and \eqref{lapocc:b<c} that
\[
\int_{0}^{\infty}e^{-qt}n\Big(1-e^{-p\zeta}\mathbf{1}\big(\zeta<\kappa_{b}^{+}\wedge\kappa_{c}^{-}, \zeta^{-}\le (t-s)\big)\Big)dt
\times 
\E\Big[e^{-p\tau_{b}^{+}-q\mom(\tau_{b}^{+})}; \tau_{b}^{+}\le \tau_{c}^{-}\Big]
=\frac{1}{qW^{(p)}(b)}
\]
since $p\mop(t)+(p+q)\mom(t)=pt+q\mom(t)$ and $\wpq(0,c)=W^{(p+q)}(-c)$,
which implies
\[
\int_{0}^{t}n\Big(1-e^{-p\zeta}\mathbf{1}\big(\zeta<\kappa_{b}^{+}\wedge\kappa_{c}^{-}, \zeta^{-}\le (t-s)\big)\Big) \E\Big[e^{-p\tau_{b}^{+}}; \tau_{b}^{+}<\tau_{c}^{-}, \mom(\tau_{b}^{+})\in ds\Big]
=\frac{t}{W^{(p)}(b)},
\]
by inverting the Laplace transform in $q$.
Similar identities can be found from other quantities.
\end{rmk}



To derive the Laplace transforms for surplus process with general initial value, we apply the Markov property in Theorem \ref{thm:1}.
Recall from Remark \ref{rmk_markov} that
$X(\cdot\wedge\tau_{\gamma})$ is not a Markov process on $\RR$, but $(X,l)$ is.
Thus, in the following we would like to present the result for $(X(0),l(0))=(x,t)$.
Recalling that $l(0)=0$ if $X(0)\ge0$ and if $X(0)=x<0$ and $l(0)=t$, we have $l(0+)=t$ and in addition,
\[
\tau_{\gamma}:=\inf\{t>0: l(t)>\gamma\}.
\]



For $p>0>c, x>c$ and $t>0$, define functions
\begin{align}
H^{(p)}(t,x,c):=&\
\left\{\begin{array}{l@{\quad\text{if}\quad}l}
W^{(p)}(x)\cdot n\big(1-e^{-p\zeta}\mathbf{1}(\zeta<\kappa_{x}^{+}\wedge\kappa_{c}^{-}, \zeta^{-}\le t)\big) & x>0,\\
1 & x=0,\\
\E_{x}\Big[e^{-p\tau_{0}^{+}}; \tau_{0}^{+}<\tau_{c}^{-}\wedge t\Big] & x\in(c,0);
\end{array}\right.\label{defn:h}\\
J^{(p)}(t,x,c):=&\
\left\{\begin{array}{l@{\quad\text{if}\quad}l}
W^{(p)}(x)\cdot e^{-pt} n\big(e^{-p\zeta^{+}}; \zeta\le\kappa_{x}^{+}, t<\zeta^{-}\wedge(\kappa_{c}^{-}-\zeta^{+})\big)& x>0,\\
0 & x=0,\\
-e^{-pt}\P_{x}\big(t<\tau_{0}^{+}\wedge\tau_{c}^{-}\big) & x\in(c,0);
\end{array}\right.\label{defn:j}\\
K^{(p)}(t,x,c):=&\
\left\{\begin{array}{l@{\quad\text{if}\quad}l}
W^{(p)}(x)\cdot n\big(e^{-p\zeta_{c}^{-}}; \kappa_{c}^{-}\le\zeta\wedge\kappa_{x}^{+}\wedge(\zeta^{+}+t)\Big) & x>0,\\
0& x=0,\\
-\E_{x}\Big[e^{-p\tau_{c}^{-}}; \tau_{c}^{-}<\tau_{0}^{+}\wedge t\Big]& x\in(c,0).
\end{array}\right.\label{defn:k}
\end{align}

\begin{prop}\label{prop:funs}
For every $p>0, c<0$ and $x>c$, the functions $H^{(p)},J^{(p)},K^{(p)}$ are right continuous in $t$ such that for any $ q>0$,
\begin{gather}
\int_{0}^{\infty}e^{-qt}H^{(p)}(t,x,c)\,dt
=\frac{W^{(p,p+q)}(x,c)}{q\cdot W^{(p+q)}(-c)},\label{defn:h2}\\
\int_{0}^{\infty}e^{-qt}J^{(p)}(t,x,c)\,dt
=\frac{Z^{(p,p+q)}(x,c)-Z^{(p)}(x)}{p+q}+\frac{W^{(p,p+q)}(x,c)(1-Z^{(p+q)}(-c))}{(p+q)\cdot W^{(p+q)}(-c)},\label{defn:j2}\\
\int_{0}^{\infty}e^{-qt}K^{(p)}(t,x,c)\,dt
=\frac{1}{q}\Big(\frac{W^{(p,p+q)}(x,c)}{W^{(p+q)}(-c)}Z^{(p+q)}(-c)-Z^{(p,p+q)}(x,c)\Big).\label{defn:k2}
\end{gather}
\end{prop}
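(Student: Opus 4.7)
The plan is to verify each Laplace transform identity \eqref{defn:h2}--\eqref{defn:k2} branch by branch ($x>0$, $x=0$, $x\in(c,0)$) and then argue right continuity in $t$. The three branches are glued together by two simple observations about $W^{(p,p+q)}$ and $Z^{(p,p+q)}$: using \eqref{defn:wzpq}, the cross integral $\int_0^x W^{(p)}(x-z)\cdots\,dz$ vanishes both at $x=0$ (empty range) and for $x\in(c,0)$ (because $W^{(p)}(x-z)=0$ for $z\in[x,0]$), so that
\[
W^{(p,p+q)}(x,c)=W^{(p+q)}(x-c),\qquad Z^{(p,p+q)}(x,c)=Z^{(p+q)}(x-c)\qquad\text{for }x\in(c,0],
\]
together with $Z^{(p)}(x)=1$ for $x\le0$ from \eqref{defn:zq}.

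For $x>0$, the identities are essentially a restatement of Proposition \ref{prop:cals}: multiplying the left hand sides of \eqref{fun:1}, \eqref{fun:3}, \eqref{fun:4} (with $b=x$) by $W^{(p)}(x)$ and absorbing the $e^{-pt}$ prefactor in the definition of $J^{(p)}$ (as a shift of the Laplace parameter from $q$ to $p+q$) reproduces the right hand sides of \eqref{defn:h2}--\eqref{defn:k2}. For $x=0$, the right hand sides collapse via the boundary values displayed above to $1/q$, $0$, $0$, which match the Laplace transforms of the constants $1,0,0$ defining the $x=0$ branches.

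For $x\in(c,0)$, Fubini gives
\[
\int_0^\infty e^{-qt}H^{(p)}(t,x,c)\,dt=\tfrac{1}{q}\E_x\bigl[e^{-(p+q)\tau_0^+};\tau_0^+<\tau_c^-\bigr],
\]
and similarly the Laplace transforms of $J^{(p)}$ and $K^{(p)}$ reduce to $-\tfrac{1}{p+q}\bigl(1-\E_x[e^{-(p+q)(\tau_0^+\wedge\tau_c^-)}]\bigr)$ and $-\tfrac{1}{q}\E_x[e^{-(p+q)\tau_c^-};\tau_c^-<\tau_0^+]$ respectively. Plugging in the classical two sided exit Laplace transforms (recovered from Proposition \ref{prop:pq:ab} by setting $p=q$ and specialising $b=0$) and using the identifications from the first paragraph, these match the right hand sides of \eqref{defn:h2}--\eqref{defn:k2} exactly.

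Right continuity of $t\mapsto H^{(p)}(t,x,c),J^{(p)}(t,x,c),K^{(p)}(t,x,c)$ follows by monotone or dominated convergence, since in each branch the $t$-dependence enters only through monotone indicators of sets such as $\{\zeta^-\le t\}$, $\{t<\zeta^-\wedge(\kappa_c^--\zeta^+)\}$, $\{\kappa_c^-\le\zeta^++t\}$, or through $\P_x(t<\tau_0^+\wedge\tau_c^-)$. The proof is largely bookkeeping once Proposition \ref{prop:cals} is in hand; the only delicate step I anticipate is recognising that a single unified formula on the right of \eqref{defn:h2}--\eqref{defn:k2} governs all three branches simultaneously, which is precisely what the collapses $W^{(p,p+q)}(x,c)=W^{(p+q)}(x-c)$ and $Z^{(p,p+q)}(x,c)=Z^{(p+q)}(x-c)$ on $(c,0]$ (together with $Z^{(p)}(x)=1$ on $(-\infty,0]$) deliver.
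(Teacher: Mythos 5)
Your proposal is correct and follows essentially the same route as the paper's proof: the $x>0$ branch is read off from Proposition \ref{prop:cals} after multiplying by $W^{(p)}(x)$, the $x=0$ branch is a direct check using the boundary collapses of $W^{(p,p+q)}$ and $Z^{(p,p+q)}$, and the $x\in(c,0)$ branch reduces via Fubini to the classical two-sided exit identities together with $W^{(p,p+q)}(x,c)=W^{(p+q)}(x-c)$, $Z^{(p,p+q)}(x,c)=Z^{(p+q)}(x-c)$ and $Z^{(p)}(x)=1$ on $(c,0]$. Your remark on right continuity via monotone convergence of the indicators is a small addition the paper leaves implicit.
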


\begin{proof}

For the case $x>0$, the Laplace transforms are already obtained in Proposition \ref{prop:cals}.
The case $x=0$ can be checked directly.

For $x<0$, 
it follows from
\[Z^{(p,p+q)}(x,c)=Z^{(p+q)}(x-c), W^{(p,p+q)}(x,c)=W^{(p+q)}(x-c)\,\, \text{and}\,\, Z^{(p)}(x)=1\]
 that
\[\begin{aligned}
&\ q \int_{0}^{\infty}e^{-qt}H^{(p)}(t,x,c)\,dt
=\E_{x}\Big[e^{-(p+q)\tau_{0}^{+}}; \tau_{0}^{+}<\tau_{c}^{-}\Big]
=\frac{W^{(p+q)}(x-c)}{W^{(p+q)}(-c)}=\frac{W^{(p,p+q)}(x,c)}{W^{(p+q)}(-c)},\\
&\ q\int_{0}^{\infty}e^{-qt}K^{(p)}(t,x,c)\,dt=-\E_{x}\Big[e^{-(p+q)\tau_{c}^{-}}; \tau_{c}^{-}<\tau_{0}^{+}\Big]\\
=&\ \frac{W^{(p+q)}(x-c)}{W^{(p+q)}(-c)}Z^{(p+q)}(-c)-Z^{(p+q)}(x-c)
=\frac{W^{(p,p+q)}(x,c)}{W^{(p+q)}(-c)}Z^{(p+q)}(-c)-Z^{(p,p+q)}(x,c),\\
&\ (p+q)\int_{0}^{\infty}e^{-qt}J^{(p)}(t,x,c)\,dt
=-\P_{x}\big(e_{p+q}<\tau_{0}^{+}\wedge\tau_{c}^{-}\big)\,dt
=\E_{x}\Big[e^{-(p+q)(\tau_{0}^{+}\wedge\tau_{c}^{-})}\Big]-1\\
=&\ \frac{W^{(p,p+q)}(x,c)}{W^{(p+q)}(-c)}+Z^{(p,p+q)}(x,c)-\frac{W^{(p,p+q)}(x,c)}{W^{(p+q)}(-c)}Z^{(p+q)}(-c)-1\\
=&\ \Big(Z^{(p,p+q)}(x,c)-Z^{(p)}(x)\Big)+\frac{W^{(p,p+q)}(x,c)(1-Z^{(p,p+q)}(-c))}{W^{(p+q)}(-c)},
\end{aligned}\]
which lead to the Laplace transforms for $x<0$ and completes the proof.
\end{proof}

Using the functions $H^{(p)},J^{(p)},K^{(p)}$ we obtain expressions for the following Laplace transforms.
\begin{thm}\label{thm:lapx}
Given $b>0>c$ and $p>0$, we have for any $x\in(c,b)$ and any $r,t\in[0,\gamma)$
\begin{gather}
\label{lapx:b<cr}
\E_{x,t}\Big[e^{-p\tau_{b}^{+}}; \tau_{b}^{+}<\tau_{c}^{-}\wedge\tau_{\gamma}\Big]=\frac{H^{(p)}(\gamma-t,x,c)}{H^{(p)}(\gamma,b,c)},\\
\label{lapx:r<bc}
\E_{x,t}\Big[e^{-p\tau_{\gamma}}; \tau_{\gamma}<\tau_{b}^{+}\wedge\tau_{c}^{-}\Big]
=\frac{H^{(p)}(\gamma-t,x,c)}{H^{(p)}(\gamma,b,c)}J^{(p)}(\gamma,b,c)-J^{(p)}(\gamma-t,x,c),\\
\label{lapx:c<br}
\E_{x,t}\Big[e^{-p\tau_{c}^{-}}; l(\tau_{c}^{-})\le r, \tau_{c}^{-}<\tau_{b}^{+}\wedge\tau_{\gamma}\Big]
=\frac{H^{(p)}(\gamma-t,x,c)}{H^{(p)}(\gamma,b,c)}K^{(p)}(r,b,c)-K^{(p)}(r-t,x,c),
\end{gather}
where we take $K^{(p)}(s,x,c)=0$ for $s\le0$.
\end{thm}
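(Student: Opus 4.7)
The plan is to propagate the identities from the base case $(x,t)=(0,0)$ (where they follow directly from Theorem \ref{thm:1}) to general admissible initial states via the strong Markov property of $(X,l)(\cdot\wedge\tau_\gamma)$. I would handle the three formulas in parallel since they share the same reduction structure, and distinguish three cases according to the sign of $x$.

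At $(x,t)=(0,0)$, the three identities follow from Theorem \ref{thm:1} after rewriting the common denominator $n(1-e^{-p\zeta}\mathbf{1}(\zeta<\kappa_b^+\wedge\kappa_c^-,\zeta^-\le\gamma))$ as $H^{(p)}(\gamma,b,c)/W^{(p)}(b)$ and the three numerators as $1/W^{(p)}(b)$, $J^{(p)}(\gamma,b,c)/W^{(p)}(b)$ and $K^{(p)}(r,b,c)/W^{(p)}(b)$ per definitions \eqref{defn:h}--\eqref{defn:k}, recalling $H^{(p)}(\gamma,0,c)=1$ and $J^{(p)}(\gamma,0,c)=K^{(p)}(r,0,c)=0$.

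For $x\in(c,0)$ and $t\in[0,\gamma)$, I would apply the strong Markov property at $\sigma:=\tau_0^+\wedge\tau_c^-\wedge(\gamma-t)$. Three mutually exclusive scenarios partition the sample space. On $\{\tau_0^+<\tau_c^-\wedge(\gamma-t)\}$, the process reaches $(0,0)$ (since $X$ has no positive jumps and $l$ resets when $X\ge0$), contributing the factor $H^{(p)}(\gamma-t,x,c)=\E_x[e^{-p\tau_0^+};\tau_0^+<\tau_c^-\wedge(\gamma-t)]$ times the base-case value. On $\{\gamma-t\le\tau_0^+\wedge\tau_c^-\}$, Parisian ruin strikes at $\tau_\gamma=\gamma-t$ with the process still in $(c,0)$, supplying the $-J^{(p)}(\gamma-t,x,c)$ term for \eqref{lapx:r<bc} via \eqref{defn:j} (and not contributing to the other two). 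On $\{\tau_c^-<\tau_0^+\wedge(\gamma-t)\}$ with the extra constraint $l(\tau_c^-)=t+\tau_c^-\le r$, the process exits below $c$ before Parisian ruin, supplying $-K^{(p)}(r-t,x,c)$ for \eqref{lapx:c<br} via \eqref{defn:k}. Summing these contributions reproduces the three right-hand sides.

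For $x\in(0,b)$ with $t=0$, I would apply the strong Markov property at $\tau_b^+\wedge\tau_0^-$. On $\{\tau_b^+<\tau_0^-\}$ the classical two-sided exit identity yields the $W^{(p)}(x)/W^{(p)}(b)$ contribution to \eqref{lapx:b<cr} (and zero for the other two). On $\{\tau_0^-<\tau_b^+\}$, the process restarts at the overshoot $(X(\tau_0^-),0)$ with $X(\tau_0^-)\in(-\infty,0]$, which is handled by the previous case's formulas at $y=X(\tau_0^-)\in(c,0]$ (with the sub-event $\{X(\tau_0^-)\le c\}$ treated separately for \eqref{lapx:c<br}, since then $\tau_c^-=\tau_0^-$). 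The main obstacle is the overshoot integration; the key identity needed for \eqref{lapx:b<cr} is
\[\E_x\bigl[e^{-p\tau_0^-}H^{(p)}(\gamma,X(\tau_0^-),c)\mathbf{1}(X(\tau_0^-)\in(c,0]);\tau_0^-<\tau_b^+\bigr]=H^{(p)}(\gamma,x,c)-\frac{W^{(p)}(x)}{W^{(p)}(b)}H^{(p)}(\gamma,b,c),\]
together with analogues in which $H$ is replaced by $J$ or $K$ (the $K$-analogue carries an additional $\E_x[e^{-p\tau_0^-};X(\tau_0^-)\le c,\tau_0^-<\tau_b^+]$ term accounting for the $\{X(\tau_0^-)\le c\}$ event). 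I would derive these from the simple Markov property of the excursion measure $n$ at $\kappa_x^+$: by \eqref{n:pq:b<c} we have $n(e^{-p\kappa_x^+};\kappa_x^+<\zeta)=1/W^{(p)}(x)$, and after $\kappa_x^+$ the shifted excursion has the law of $X$ under $\P_x$ up to $\tau^{\{0\}}$; the excursion-measure difference $n_x-n_b$ (in the $H$ case, $n(e^{-p\zeta}\mathbf{1}(\kappa_x^+\le\zeta<\kappa_b^+\wedge\kappa_c^-,\zeta^-\le\gamma))$) thus equals $(1/W^{(p)}(x))\E_x[e^{-p\tau^{\{0\}}};\tau^{\{0\}}<\tau_b^+\wedge\tau_c^-,\mo^-(\tau^{\{0\}})\le\gamma]$, and a further strong-Markov decomposition at $\tau_0^-$ expresses this as the stated overshoot expectation after multiplication by $W^{(p)}(x)$. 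Combining with the case-2 formulas then yields the three identities for $x\in(0,b)$.
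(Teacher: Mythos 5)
Your proposal is correct, and for the initial states $(0,0)$ and $(x,t)$ with $x\in(c,0]$ it follows the same route as the paper: identify the three formulas at the origin with Theorem \ref{thm:1} via the definitions \eqref{defn:h}--\eqref{defn:k}, then renew at $\tau_{0}^{+}\wedge\tau_{c}^{-}\wedge(\gamma-t)$ for a negative starting point. Where you genuinely diverge is the case $x\in(0,b)$. The paper exploits that all three identities of Theorem \ref{thm:1} are stated for the process started at $0$ and that the upward passage to level $x$ factors out multiplicatively, e.g. $\E\big[e^{-p\tau_{b}^{+}};\tau_{b}^{+}<\tau_{c}^{-}\wedge\tau_{\gamma}\big]=\E\big[e^{-p\tau_{x}^{+}};\tau_{x}^{+}<\tau_{c}^{-}\wedge\tau_{\gamma}\big]\cdot\E_{x}\big[e^{-p\tau_{b}^{+}};\tau_{b}^{+}<\tau_{c}^{-}\wedge\tau_{\gamma}\big]$, so the $x$-initial quantities are obtained by dividing (and subtracting) the $0$-initial ones, and no overshoot distribution ever appears. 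You instead decompose forward from $x$ at $\tau_{b}^{+}\wedge\tau_{0}^{-}$ and integrate the negative-start formulas against the law of $X(\tau_{0}^{-})$, which forces you to prove the identity $\E_{x}\big[e^{-p\tau_{0}^{-}}H^{(p)}(\gamma,X(\tau_{0}^{-}),c)\mathbf{1}(X(\tau_{0}^{-})\in(c,0]);\tau_{0}^{-}<\tau_{b}^{+}\big]=H^{(p)}(\gamma,x,c)-\frac{W^{(p)}(x)}{W^{(p)}(b)}H^{(p)}(\gamma,b,c)$ together with its $J$- and $K$-analogues. Your derivation of these via the simple Markov property of $n$ at $\kappa_{x}^{+}$ is sound --- it is really the same level-$x$ factorization carried out under $n$ rather than under $\P$, followed by a further splitting at the first passage below $0$; note that an excursion reaching $x>0$ is necessarily positive on $(0,\kappa_{x}^{+}]$, so the constraints $\kappa_{b}^{+},\kappa_{c}^{-},\zeta^{-}$ only bind after $\kappa_{x}^{+}$, which is what makes the factorization clean. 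I also confirm that the $K$-analogue needs the extra term $\E_{x}\big[e^{-p\tau_{0}^{-}};X(\tau_{0}^{-})\le c,\tau_{0}^{-}<\tau_{b}^{+}\big]$ you flag, coming from the event that $X$ jumps from $(0,b)$ directly below $c$. The trade-off: the paper's argument is shorter and avoids all overshoot bookkeeping; yours is longer but yields the overshoot identities for $H^{(p)},J^{(p)},K^{(p)}$ as reusable byproducts and makes explicit how the negative-start formulas propagate to positive starting points.
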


\begin{proof}
We first consider the case of $x>0$ and $t=0$ for which 
we apply the Markov property of $X$ at $\tau_{x}^{+}$ to Theorem \ref{thm:1}.
Under $\P$ and for shift operator $\theta_t$, we have from $\tau_{b}^{+}=\tau_{x}^{+}+\tau_{b}^{+}\circ\theta_{\tau_{x}^{+}}$ that
\[
\E\Big[e^{-p\tau_{b}^{+}}; \tau_{b}^{+}<\tau_{c}^{-}\wedge\tau_{\gamma}\Big]
=\E\Big[e^{-p\tau_{x}^{+}}; \tau_{x}^{+}<\tau_{c}^{-}\wedge\tau_{\gamma}\Big]
\cdot\E_{x}\Big[e^{-p\tau_{b}^{+}}; \tau_{b}^{+}<\tau_{c}^{-}\wedge\tau_{\gamma}\Big],
\]
applying \eqref{eqn:n:b<cr} to the above equation and making substitutions with \eqref{n:pq:b<c} and \eqref{defn:h}
gives \eqref{lapx:b<cr}.
Similarly, we have
\[\begin{aligned}
\E\Big[e^{-p\tau_{\gamma}};&\ \tau_{\gamma}<\tau_{b}^{+}\wedge\tau_{c}^{-}\Big]
=\E\Big[e^{-p\tau_{\gamma}}; \tau_{\gamma}<\tau_{x}^{+}\wedge\tau_{c}^{-}\Big]\\
&\quad + \E\Big[e^{-p\tau_{x}^{+}}; \tau_{x}^{+}<\tau_{c}^{-}\wedge\tau_{\gamma}\Big]\cdot
\E_{x}\Big[e^{-p\tau_{\gamma}}; \tau_{\gamma}<\tau_{b}^{+}\wedge\tau_{c}^{-}\Big],\\
\E\Big[e^{-p\tau_{c}^{-}};&\ l(\tau_{c}^{-})\le r, \tau_{c}^{-}<\tau_{b}^{+}\wedge\tau_{\gamma}\Big]
=\E\Big[e^{-p\tau_{c}^{-}}; l(\tau_{c}^{-})\le r, \tau_{c}^{-}<\tau_{x}^{+}\wedge\tau_{\gamma}\Big]\\
&\quad +\E\Big[e^{-p\tau_{x}^{+}}; \tau_{x}^{+}<\tau_{c}^{-}\wedge\tau_{\gamma}\Big]\cdot
\E_{x}\Big[e^{-p\tau_{c}^{-}}; l(\tau_{c}^{-})\le r, \tau_{c}^{-}<\tau_{b}^{+}\wedge\tau_{\gamma}\Big].
\end{aligned}\]
Then applying \eqref{eqn:n:r<bc}, \eqref{eqn:n:b<cr}, \eqref{eqn:n:c<br} and making substitutions using \eqref{defn:j} and \eqref{defn:k} gives \eqref{lapx:r<bc} and \eqref{lapx:c<br}.

For the case $(X,l)=(x,t)$ with $x\in(c,0]$ and $t\in[0,\gamma)$,
we have $\tau_{b}^{+}>\tau_{0}^{+}$ and only need to compare $\tau_{c}^{-}, \tau_{0}^{+}$ and $\gamma-t$, then everything is renewed at $\tau_{0}^{+}$ by the Markov property, that is,
\[
\E_{x,t}\Big[e^{-p\tau_{b}^{+}}; \tau_{b}^{+}<\tau_{c}^{-}\wedge\tau_{\gamma}\Big]
=\E_{x}\Big[e^{-p\tau_{0}^{+}}; \tau_{0}^{+}<\tau_{c}^{-}\wedge(\gamma-t)\Big]
\cdot\E\Big[e^{-p\tau_{b}^{+}}; \tau_{b}^{+}<\tau_{c}^{-}\wedge\tau_{\gamma}\Big].
\]
Substituting the last expressions of in \eqref{defn:h} for $H^{(p)}$ gives \eqref{lapx:b<cr}.
Similarly, we have
\[\begin{aligned}
\E_{x,t}\Big[e^{-p\tau_{\gamma}};&\ \tau_{\gamma}<\tau_{b}^{+}\wedge\tau_{c}^{-}\Big]
=e^{-p(\gamma-t)}\P_{x}\big((\gamma-t)<\tau_{0}^{+}\wedge\tau_{c}^{-}\big)\\
&\quad + \E_{x}\Big[e^{-p\tau_{0}^{+}}; \tau_{0}^{+}<\tau_{c}^{-}\wedge(\gamma-t)\Big]
\cdot \E\Big[e^{-p\tau_{\gamma}}; \tau_{\gamma}<\tau_{b}^{+}\wedge\tau_{c}^{-}\Big]\\
&\quad=-J^{(p)}(\gamma-t,x,c)+H^{(p)}(\gamma-t,x,c)\cdot \E\Big[e^{-p\tau_{\gamma}}; \tau_{\gamma}<\tau_{b}^{+}\wedge\tau_{c}^{-}\Big].
\end{aligned}\]
Making substitutions using \eqref{defn:h} and \eqref{defn:j} we prove \eqref{lapx:r<bc}.
Finally, since
\[\begin{aligned}
\E_{x,t}\Big[e^{-p\tau_{c}^{-}};&\ l(\tau_{c}^{-})\le r, \tau_{c}^{-}<\tau_{b}^{+}\wedge\tau_{\gamma}\Big]
= \E_{x}\Big[e^{-p\tau_{c}^{-}}; \tau_{c}^{-}<\tau_{b}^{+}\wedge(r-t)\Big]\\
&\quad +\E_{x}\Big[e^{-p\tau_{0}^{+}}; \tau_{0}^{+}<\tau_{c}^{-}\wedge(\gamma-t)\Big]
\cdot \E\Big[e^{-p\tau_{c}^{-}}; l(\tau_{c}^{-})\le r, \tau_{c}^{-}<\tau_{b}^{+}\wedge\tau_{\gamma}\Big]\\
&\quad = -K^{(p)}(r-t,x,c)+H^{(p)}(\gamma-t,x,c)
\cdot \E\Big[e^{-p\tau_{c}^{-}}; l(\tau_{c}^{-})\le r, \tau_{c}^{-}<\tau_{b}^{+}\wedge\tau_{\gamma}\Big],
\end{aligned}\]
making substitutions using \eqref{defn:h} and \eqref{defn:k} we prove \eqref{lapx:c<br}. This completes the proof.
\end{proof}

We also obtain the following resolvent measure under $\P$, and leave that under $\P_{(x,t)}$ to interested readers where heavier notations are expected.
\begin{thm}\label{thm:resl}
For $b>0>c, p>0$ and $r\in(0,\gamma]$, we have
\begin{equation}\label{resl:np}
\begin{aligned}
&\ \int_{0}^{\infty}e^{-pt}\E\Big[f\big(X(t)\big); l(t)\le r, t<\tau_{b}^{+}\wedge\tau_{c}^{-}\wedge\tau_{\gamma}\Big]\,dt\\
=&\ n\Big(\int_{0}^{\zeta\wedge\kappa_{b}^{+}\wedge\kappa_{c}^{-}\wedge(\zeta^{+}+r)}e^{-pt}f\big(\epsilon(t)\big)\,dt\Big)\Big/n\Big(1-e^{-p\zeta}\mathbf{1}(\zeta<\kappa_{b}^{+}\wedge\kappa_{c}^{-},\zeta^{-}\le \gamma)\Big).
\end{aligned}
\end{equation}
for ever bounded and measurable function $f$, and for every $q>0$ we have
\begin{equation}\label{fun:2}
\begin{aligned}
&\ \int_{0}^{\infty}e^{-qt}n\Big(\int_{0}^{\zeta\wedge\kappa_{b}^{+}\wedge\kappa_{c}^{-}\wedge(\zeta^{+}+t)}e^{-ps}f\big(\epsilon(s)\big)\,ds\Big)\,dt\\
=&\ \frac{1}{q}\int_{c}^{b}f(y)\Big(\frac{W^{(p,p+q)}(b,y)}{W^{(p)}(b)}-\frac{W^{(p,p+q)}(b,c)W^{(p+q)}(-y)}{W^{(p)}(b)W^{(p+q)}(-c)}\Big)\,dy.
\end{aligned}
\end{equation}
\end{thm}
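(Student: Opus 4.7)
The proof will split naturally into the two identities \eqref{resl:np} and \eqref{fun:2}; the first is an excursion-decomposition/compensation argument following the same template used for Theorem \ref{thm:1}, and the second is a Fubini-type reduction to the already-proven weighted-occupation identity \eqref{n:pq:resl}.

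For \eqref{resl:np}, the plan is to decompose the integral $\int_{0}^{\tau_{b}^{+}\wedge\tau_{c}^{-}\wedge\tau_{\gamma}} e^{-pt} f(X(t))\,\mathbf{1}(l(t)\le r)\,dt$ over the excursion intervals $[L^{-1}(s-), L^{-1}(s))$, noting by \eqref{d=0} that the contribution from $\bar{\mathcal{L}}$ is negligible. Within the excursion indexed by $s$, a time $t=L^{-1}(s-)+u$ with $u<\zeta_s$ gives $l(t)=(u-\zeta_s^+)^{+}$ (the excursion has no positive jumps and spends its first $\zeta_s^+$ units above $0$); hence the constraint $l(t)\le r$ inside that excursion is exactly $u\le \zeta_s^+ + r$. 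The constraint $t<\tau_b^{+}\wedge\tau_c^{-}\wedge\tau_\gamma$ forces (a) every earlier excursion to lie in the event $A=\{\zeta<\kappa_b^{+}\wedge\kappa_c^{-},\,\zeta^{-}\le\gamma\}$ and (b) the current excursion's local coordinate $u$ to satisfy $u<\zeta\wedge\kappa_b^{+}\wedge\kappa_c^{-}$. Enlarging the point process to $\{(\epsilon_s,e_{p,s})\}$ as in \eqref{n:enlarge} and applying the compensation formula exactly as in \eqref{iden:n:1} yields
\[
\text{LHS of \eqref{resl:np}} = n\Bigl(\int_{0}^{\zeta\wedge\kappa_{b}^{+}\wedge\kappa_{c}^{-}\wedge(\zeta^{+}+r)} e^{-pu}f(\epsilon(u))\,du\Bigr) \cdot \int_0^\infty \P(\kappa_{A^c}>t)\,dt,
\]
and $\kappa_{A^c}$ is exponential with rate $n_e(A^c)=n\bigl(1-e^{-p\zeta}\mathbf{1}(\zeta<\kappa_b^{+}\wedge\kappa_c^{-},\,\zeta^{-}\le\gamma)\bigr)$, giving the denominator.

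For \eqref{fun:2}, I will use Fubini together with the standard identity $q\int_0^\infty e^{-qt}\phi(t)\,dt=\E[\phi(e_q)]$ for $e_q$ independent exponential with rate $q$. This rewrites the left side (multiplied by $q$) as
\[
n\Bigl(\int_{0}^{\zeta\wedge\kappa_{b}^{+}\wedge\kappa_{c}^{-}} e^{-ps}\bigl(\mathbf{1}(s\le\zeta^{+})+\mathbf{1}(s>\zeta^{+})\,\P(e_q>s-\zeta^{+})\bigr) f(\epsilon(s))\,ds\Bigr).
\]
Using $\mathbf{1}(s\le\zeta^{+})=\mathbf{1}(\epsilon(s)>0)$ and $\mathbf{1}(s>\zeta^{+})=\mathbf{1}(\epsilon(s)<0)$ (the excursion has no positive jumps, and $n(\epsilon(s)=0)=0$), and then combining the exponents, one checks that the total weight inside the integrand is exactly $\exp(-\eta^{(p,p+q)}(\epsilon,s))$. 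Hence the expression collapses to
\[
n\Bigl(\int_{0}^{\zeta\wedge\kappa_{b}^{+}\wedge\kappa_{c}^{-}} e^{-\eta^{(p,p+q)}(\epsilon,s)} f(\epsilon(s))\,ds\Bigr),
\]
which is precisely the left side of \eqref{n:pq:resl} with $q$ replaced by $p+q$. Substituting its value and dividing by $q$ produces \eqref{fun:2}.

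The main obstacle is in Part 1: correctly translating the path constraint $\{l(t)\le r,\,t<\tau_b^{+}\wedge\tau_c^{-}\wedge\tau_\gamma\}$ into the joint event ``all prior excursions lie in $A$'' $\cap$ ``the current excursion is truncated at $\zeta\wedge\kappa_b^{+}\wedge\kappa_c^{-}\wedge(\zeta^{+}+r)$'' so that the compensation formula applies cleanly. Once that decomposition is written down, everything else (including Part 2) is routine manipulation matched against the results already established in Proposition \ref{prop:npq:bc}.
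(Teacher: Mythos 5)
Your proposal is correct and follows essentially the same route as the paper: both parts decompose over excursion intervals using $l(t)=(u-\zeta^{+})^{+}$ inside an excursion, apply the compensation formula with the enlarged point process and the event $A$ from \eqref{iden:n:1}, and reduce \eqref{fun:2} to \eqref{n:pq:resl} via an independent exponential $e_{q}$ and the identity $e^{-ps}e^{-q(s-\zeta^{+})^{+}}=e^{-\eta^{(p,p+q)}(\epsilon,s)}$. The only cosmetic difference is that the paper evaluates $f\big(X(e_{p})\big)$ at an independent exponential time and invokes memorylessness, whereas you decompose the Lebesgue integral $\int_{0}^{\infty}e^{-pt}\cdots dt$ directly.
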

\begin{proof}
Let $e_{p}$ be an exponential variable independent to $X$. Then $\P\big(X(e_{p})=0\big)=1$.
For $t=L(e_{p})$, $L^{-1}(t-)<e_{p}<L^{-1}(t), X(e_{p})=\epsilon_{t}(e_{p}-L^{-1}(t-))$ and $l(e_{p})=(e_{p}-L^{-1}(t-)-\zeta_{t}^{+})^{+}$. Then
\[\begin{aligned}
&\ f\big(X(e_{p})\big)\mathbf{1}\big(l(e_{p})\le r, e_{p}<\tau_{b}^{+}\wedge\tau_{c}^{-}\wedge\tau_{\gamma}\big)\\
=&\ \sum_{t}\mathbf{1}\big(L^{-1}(t-)<e_{p}<L^{-1}(t)\big)\cdot\mathbf{1}\big(e_{p}-L^{-1}(t-)<\zeta_{t}\wedge\kappa_{b,t}^{+}\wedge\kappa_{c,t}^{-}\wedge(\zeta^{+}_{t}+r)\big)\\
&\quad \times \mathbf{1}\big(\zeta_{s}<\infty,\zeta_{s}<\kappa_{b,s}^{+}\wedge\kappa_{c,s}^{-},\zeta_{s}^{-}\le\gamma, \forall s<t\big)\cdot f\big(\epsilon_{t}(e_{p}-L^{-1}(t-))\big).
\end{aligned}\]

Noticing that the sum above is over a countable set of $t$, making use of the memoryless property of $e_{p}$ and the fact $L^{-1}(t-)=\sum_{s<t}\zeta_{s}$ used in \eqref{iden:n:2}, we have
\[\begin{aligned}
&\ \E\Big[f\big(X(e_{p})\big); l(e_{p})\le r, e_{p}<\tau_{b}^{+}\wedge\tau_{c}^{-}\wedge\tau_{\gamma}\Big]\\
=&\ \E\bigg[\sum_{t}f\big(\epsilon_{t}(e_{p})\big)\mathbf{1}\big(e_{p}<\zeta_{t}\wedge\kappa_{b,t}^{+}\wedge\kappa_{c,t}^{-}\wedge(\zeta^{+}_{t}+r)\big)\\
&\quad \times \exp\Big(-p\sum_{s<t}\zeta_{s}\Big)\mathbf{1}\big(\zeta_{s}<\infty,\zeta_{s}<\kappa_{b,s}^{+}\wedge\kappa_{c,s}^{-},\zeta_{s}^{-}\le\gamma, \forall s<t\big)\bigg]\\
=&\ \E\bigg[\sum_{t}f\big(\epsilon_{t}(e_{p,t})\big)\mathbf{1}\big(e_{p,t}<\zeta_{t}\wedge\kappa_{b,t}^{+}\wedge\kappa_{c,t}^{-}\wedge(\zeta^{+}+r)\big)\times \mathbf{1}\big(\kappa_{A^{c}}>t\big)\bigg],
\end{aligned}\]
where in the last identity $\{(\epsilon_{t},e_{p,t})\}_{t>0}$ is the enriched Poisson point process, and $\kappa_{{A}^{c}}$ is the first entrance time used in \eqref{iden:n:1}.
Then, the compensation formula can be applied and gives
\[\begin{aligned}
&\ p \int_{0}^{\infty}e^{-pt}\E\Big[f(X(t)); l(t)\le r, t<\tau_{b}^{+}\wedge\tau_{c}^{-}\wedge\tau_{\gamma}\Big]\,dt\\
=&\ n_{e}\Big(f\big(\epsilon(e_{p})\big); e_{p}<\zeta\wedge\kappa_{b}^{+}\wedge\kappa_{c}^{-}\wedge(\zeta^{+}+r)\Big)\Big/ n_{e}\big(1-\mathbf{1}(A)\big)
\end{aligned}\]
which proves \eqref{resl:np}.
For the numerators in \eqref{resl:np}, we have
\[\begin{aligned}
&\ q\cdot \int_{0}^{\infty}e^{-qr}n\Big(\int_{0}^{\zeta\wedge\kappa_{b}^{+}\wedge\kappa_{c}^{-}\wedge(\zeta^{+}+r)}e^{-pt}f\big(\epsilon(t)\big)\,dt\Big)\,dr\\
=&\ \E\Big[n\Big(\int_{0}^{\zeta\wedge\kappa_{b}^{+}\wedge\kappa_{c}^{-}\wedge(\zeta^{+}+e_{q})}e^{-pt}f\big(\epsilon(t)\big)\,dt\Big)\Big]
=n\Big(\int_{0}^{\zeta\wedge\kappa_{b}^{+}\wedge\kappa_{c}^{-}}e^{-pt}e^{-q(t-\zeta^{+})^{+}}f\big(\epsilon(t)\big)\,dt\Big)
\end{aligned}\]
applying \eqref{n:pq:resl} proves \eqref{fun:2}.
\end{proof}

\section{Recovery of the previous Parisian ruin results}\label{recover}
In this section we recover the results in \cite{Loeffen2013} and \cite{Loeffen2018}.

For $y\ge0$, we have in \eqref{fun:2}
\[
\frac{W^{(p,p+q)}(b,y)}{W^{(p)}(y)}-\frac{W^{(p,p+q)}(b,c)W^{(p+q)}(-y)}{W^{(p)}(b)W^{(p+q)}(-c)}=\frac{W^{(p)}(b-y)}{W^{(p)}(b)}.
\]
Therefore, inverting the Laplace transform and plugging into \eqref{resl:np} we always have for $y\ge0$
\[
\int_{0}^{\infty}e^{-ps}\P\Big(X(s)\in dy, l(s)\le r, s<\tau_{b}^{+}\wedge\tau_{c}^{-}\wedge\tau_{\gamma}\Big)\,ds
=\frac{W^{(p)}(b-y)}{H^{(p)}(\gamma,b,c)},
\]
noticing that $l(s)=0$ for $X(s)=y\ge0$. 
Then applying the Markov property, similar to the proof of Theorem \ref{thm:lapx}, we find for $y\ge0$ and $x\in(c,b)$ and $t,r\in[0,\gamma)$
\[\begin{aligned}
&\ \int_{0}^{\infty}e^{-ps}\P_{(x,t)}\Big(X(s)\in dy, l(s)\le r, s<\tau_{b}^{+}\wedge\tau_{c}^{-}\wedge\tau_{\gamma}\Big)\,ds\\
=&\ \Big(\frac{H^{(p)}(\gamma-t,x,c)}{H^{(p)}(\gamma,b,c)}W^{(p)}(b-y)-W^{(p)}(x-y)\Big)\,dy,
\end{aligned}\]
which recovers \cite[Prop.3.4]{Loeffen2018}.

We now further recover the results of \cite{Loeffen2013,Loeffen2018} summarized in Proposition \ref{prop:ro18} by letting $c\to-\infty$.
Recalling $\Lambda^{(p)}$ defined in \eqref{ken:w}, we first claim that for $x\in\mathbb{R}$
\begin{gather}
\lim_{c\to-\infty}H^{(p)}(t,x,c)=e^{-pt}\Lambda^{(p)}(x,t)\label{limit:H},\\
\lim_{c\to-\infty}J^{(p)}(t,x,c)=e^{-pt}\Big(\Lambda^{(p)}(x,t)- p \int_{0}^{t}\Lambda^{(p)}(x,s)\,ds- Z^{(p)}(x)\Big).\label{limit:J}
\end{gather}
It then follows from Theorem \ref{thm:lapx} that
\[\begin{gathered}
\E_{x,t}\Big[e^{-p\tau_{b}^{+}}; \tau_{b}^{+}<\tau_{\gamma}\Big]=e^{-pt}\frac{\Lambda^{(p)}(x,\gamma-t)}{\Lambda^{(p)}(b,\gamma)},\\
\begin{aligned}
\E_{x,t}\Big[e^{-p(\tau_{\gamma}-\gamma)}; \tau_{\gamma}<\tau_{b}^{+}\Big]
=&\ e^{-pt}\Big(Z^{(p)}(x)+ p\int_{0}^{\gamma-t}\Lambda^{(p)}(x,s)\,ds\Big)\\
&\quad -e^{-pt}\frac{\Lambda^{(p)}(x,\gamma-t)}{\Lambda^{(p)}(b,\gamma)}\Big(Z^{(p)}(b)+ p\int_{0}^{\gamma}\Lambda^{(p)}(b,s)\,ds\Big),
\end{aligned}
\end{gathered}\]
which coincides with \eqref{prop:ro18:b<r} and \eqref{prop:ro18:r<b} by taking $t=0$.

To show the limits \eqref{limit:H} and \eqref{limit:J}, by the definitions of $H^{(p)}$ and $J^{(p)}$ in \eqref{defn:h} and \eqref{defn:j},
we can invert the limits of the Laplace transforms in Proposition \ref{prop:funs}.
From the identities \eqref{defn:wzpq} and \eqref{wz-limit}, we have for $x>c$ in \eqref{defn:h2}
\[\begin{aligned}
\frac{W^{(p,p+q)}(x,c)}{W^{(p+q)}(-c)}=&\ \frac{W^{(p+q)}(x-c)}{W^{(p+q)}(-c)}- q\int_{0}^{x}W^{(p)}(x-z)\frac{W^{(p+q)}(z-c)}{W^{(p+q)}(-c)}\,dz\\
\to&\ e^{\phi(p+q)x}-q\int_{0}^{x}e^{\phi(p+q)z}W^{(p)}(x-z)\,dz.
\end{aligned}\]
as $c\to-\infty$, where the limit above also holds for the case $x<0$
in which each integral equals to $0$ since $W(x-z)=0$ for $z\in(x,0)$ by definition.
Further noticing that
$$\D q\int_{0}^{\infty}e^{-\phi(p+q)z}W^{(p)}(z)\,dz=1,$$
 we have by change of variable
\begin{equation}
\label{lim1}
\frac{W^{(p,p+q)}(x,c)}{W^{(p+q)}(-c)}
\to q \int_{0}^{\infty}e^{-\phi(p+q)z}W^{(p)}(z+x)\,dz
=q \int_{0}^{\infty}e^{-(p+q)t}\Lambda^{(p)}(x,t)\,dt
\end{equation}
as $c\to -\infty$, where \eqref{ken:w} is applied for the last equality. Therefore,
 \eqref{limit:H} holds first for almost all $t>0$ and then for all $t>0$ by continuity.

For the Laplace transform of $J^{(p)}$ in \eqref{defn:j2}, by \eqref{lim1} and \eqref{lapocc:c<b} we have as $c\to-\infty$, for $x>0$
\[\begin{aligned}
&\ Z^{(p,p+q)}(x,c)-\frac{\wpq(x,c)}{W^{(p+q)}(-c)}Z^{(p+q)}(-c)\\
=&\ \frac{W^{(p,p+q)}(x,c)}{-W^{(p+q)}(-c)}\Big(Z^{(p+q)}(-c)-\frac{W^{(p+q)}(-c)}{W^{(p,p+q)}(x,c)}Z^{(p,p+q)}(x,c)\Big)\\
=&\ \frac{W^{(p,p+q)}(x,c)}{-W^{(p+q)}(-c)}
\E\Big[\exp\big(-\mathcal{O}^{(p,p+q)}(\tau_{c}^{-})\big); \tau_{c}^{-}<\tau_{x}^{+}\Big]
\to 0;
\end{aligned}\]
 while for $x<0$,
\[\begin{aligned}
&\ Z^{(p,p+q)}(x,c)-\frac{\wpq(x,c)}{W^{(p+q)}(-c)}Z^{(p+q)}(-c)\\
=&\ Z^{(p+q)}(x-c)-\frac{W^{(p+q)}(x-c)}{W^{(p+q)}(-c)}Z^{(p+q)}(-c)\\
=&\ \E_{x}\Big[e^{-(p+q)\tau_{c}^{-}}; \tau_{c}^{-}<\tau_{0}^{+}\Big]\to 0.
\end{aligned}\]
Thus, in both cases we have from \eqref{lim1} that 
\[
\text{RHS of \eqref{defn:j2}}\to\frac{1}{p+q}
\Big(q\int_{0}^{\infty}e^{-(p+q)t}\Lambda^{(p)}(x,t)\,dt-Z^{(p)}(x)\Big).
\]
Inverting the Laplace transform gives \eqref{limit:J}.

\appendix
\section*{Appendix}
\setcounter{equation}{0}
\setcounter{thm}{0}
\renewcommand{\theequation}{A.\arabic{equation}}
\renewcommand{\thethm}{A.\arabic{thm}}
Similar to the proof of Theorem \ref{thm:1}, we rewrite the variables in Proposition \ref{prop:pq:ab} first and then the compensation formula is applied, where we only highlight the difference.
\begin{proof}[Proof of Proposition \ref{prop:npq:bc}]
Recall the weighted occupation time $\eta^{(p,q)}$ defined for excursion.
On the set $\{L^{-1}(t)<\infty\}$, we have $\zeta_{s}<\infty$ for all $s\le t$, in terms of excursion
\begin{equation}\label{eqn:A1}
\begin{aligned}
&\ \exp\Big(-\mo\big(L^{-1}(t)\big)\Big)\mathbf{1}\big(L^{-1}(t)<\tau_{b}^{+}\wedge\tau_{c}^{-}\big)\\
=&\ \exp\Big(-\sum_{s\le t}\eta^{(p,q)}(\epsilon_{s},\zeta_{s})\Big)\mathbf{1}\big(\zeta_{s}<\kappa_{b,s}^{+}\wedge\kappa_{c,s}^{-},\forall s\le t\big)\\
=&\ \lim_{r\to0+}\exp\Big(-\sum_{s\in I_{r}(t)}\eta^{(p,q)}(\epsilon_{s},\zeta_{s})\Big)\mathbf{1}\big(\zeta_{s}<\kappa_{b,s}^{+}\wedge\kappa_{c,s}^{-}, s\in I_{r}(t)\big)
\end{aligned}
\end{equation}
where $I_{r}(t):=\{s\le t:\zeta_{s}>r\}$ which is Poisson distributed with parameter $\lambda=n(\zeta>r)\cdot t$, and given $I_{r}(t)$, $\{\epsilon_{s}, s\in I_{r}(t)\}$ are i.i.d. with law $n_{r}:=n(\cdot|\zeta>r)$. It then follows that
\[\begin{aligned}
&\ \E\bigg[\exp\Big(-\sum_{s\in I_{r}(t)}\eta^{(p,q)}(\epsilon_{s},\zeta_{s})\Big)\mathbf{1}\big(\zeta_{s}<\kappa_{b,s}^{+}\wedge\kappa_{c,s}^{-}, s\in I_{r}(t)\big)\bigg]\\
=&\ \sum_{k\ge0}\frac{\lambda^{k}}{k!}e^{-\lambda}\Big(n_{r}\Big[e^{-\eta^{(p,q)}(\epsilon,\zeta)}; \zeta<\kappa_{b}^{+}\wedge\kappa_{c}^{-}\Big]\Big)^{k}\\
=&\ \exp\Big(-\lambda\Big(1- \int_{\mathscr{E}}e^{-\eta^{(p,q)}(\epsilon,\zeta)}\mathbf{1}(\zeta<\kappa_{b}^{+}\wedge\kappa_{c}^{-})dn_{r}(\epsilon)\Big)\Big)\\
=&\ \exp\Big(-t\int_{\mathscr{E}}\Big(1-e^{-\eta^{(p,q)}(\epsilon,\zeta)}\mathbf{1}(\zeta<\kappa_{b}^{+}\wedge\kappa_{c}^{-})\Big)\mathbf{1}(\zeta>r)dn(\epsilon)\Big),
\end{aligned}\]
where noticing that $n_{r}(\zeta>r)=1$ and $n(\zeta>r) n_{r}=n(\cdot\mathbf{1}(\zeta>r))$. Let $r\to0+$ gives
\begin{equation}\label{eqn:A2}
\begin{aligned}
&\ \E\bigg[\exp\Big(-\sum_{s\le t}\eta^{(p,q)}(\epsilon_{s},\zeta_{s})\Big)\mathbf{1}\big(\zeta_{s}<\kappa_{b,s}^{+}\wedge\kappa_{c,s}^{-},\forall s\le t\big)\bigg]\\
=&\ \exp\Big(-t \cdot n\big(1-e^{-(p\zeta^{+}+q\zeta^{-})}\mathbf{1}(\zeta<\kappa_{b}^{+}\wedge\kappa_{c}^{-})\big)\Big),
\end{aligned}
\end{equation}
where $\eta^{(p,q)}(\epsilon,\zeta)=p\zeta^{+}+q\zeta^{-}$.
Plugging \eqref{eqn:A2} into \eqref{eqn:A1} and applying \eqref{lapocc:bc} gives \eqref{n:pq:bc}.

The proofs for \eqref{n:pq:b<c}, \eqref{n:pq:c<b} and \eqref{n:pq:resl} are similar to those for \eqref{eqn:n:b<cr}, \eqref{eqn:n:c<br} and \eqref{resl:np}.

On sets $\{\tau_{b}^{+}<\infty\}$ and $\{\tau_{c}^{-}<\infty\}$ we have respectively,
\[\begin{aligned}
&\ \exp\Big(-\mo(\tau_{b}^{+})\Big)\mathbf{1}\big(\tau_{b}^{+}<\tau_{c}^{-}\big)\\
=&\ \sum_{t}e^{-p\kappa_{b,t}^{+}}\mathbf{1}\big(\kappa_{b,t}^{+}<\zeta_{t}\big)
\times\exp\Big(-\sum_{s<t}\eta^{(p,q)}(\epsilon_{s},\zeta_{s})\Big)\mathbf{1}\big(\zeta_{s}<\kappa_{b,s}^{+}\wedge\kappa_{c,s}^{-},\forall s<t\big),\\
&\ \exp\Big(-\mo(\tau_{c}^{-})\Big)\mathbf{1}\big(\tau_{c}^{-}<\tau_{b}^{+}\big)\\
=&\ \sum_{t}e^{-p\zeta_{t}^{+}-q(\kappa_{c,t}^{-}-\zeta^{+})}\mathbf{1}\big(\kappa_{c,t}^{-}<\kappa_{b,t}^{+}\wedge\zeta_{t}\big)
\times\exp\Big(-\sum_{s<t}\eta^{(p,q)}(\epsilon_{s},\zeta_{s})\Big)\mathbf{1}\big(\zeta_{s}<\kappa_{b,s}^{+}\wedge\kappa_{c,s}^{-}, \forall s<t\big),
\end{aligned}\]
where $\eta^{(p,q)}(\kappa_{c,t}^{-})=p\zeta_{t}^{+}+q(\kappa_{c,t}^{-}-\zeta^{+})$ for $\kappa_{c,t}^{-}<\infty$ due to the absence of positive jumps for the excursion $\epsilon_{t}$.
Applying the compensation formula and \eqref{eqn:A2} we have
\[\begin{aligned}
\E\Big[\exp\Big(-\mo(\tau_{b}^{+})\Big); \tau_{b}^{+}\le\tau_{c}^{-}\Big]=&\ \frac{n\big(e^{-p\kappa_{b}^{+}}; \kappa_{b}^{+}<\zeta)}
{n\big(1-e^{-(p\zeta^{+}+q\zeta^{-})}\mathbf{1}(\zeta<\kappa_{b}^{+}\wedge\kappa_{c}^{-})\big)},\\
\E\Big[\exp\Big(-\mo(\tau_{c}^{-})\Big); \tau_{c}^{-}\le\tau_{b}^{+}\Big]=&\ \frac{n\big(e^{(q-p)\zeta^{+}-q\kappa_{c}^{-}};\kappa_{c}^{-}<\kappa_{b}^{+}\wedge\zeta\big)}{n\big(1-e^{-(p\zeta^{+}+q\zeta^{-})}\mathbf{1}(\zeta<\kappa_{b}^{+}\wedge\kappa_{c}^{-})\big)}.
\end{aligned}\]
Applying \eqref{lapocc:b<c} and \eqref{lapocc:c<b},
one can prove \eqref{n:pq:b<c} and \eqref{n:pq:c<b}, respectively.

Lastly, let $e_{p}$ be an exponential random variable independent of $X$, we have
\[\begin{aligned}
&\ \E\Big[\exp\Big((p-q)\mom(e_{p})\Big)\cdot f\big(X(e_{p})\big); e_{p}<\tau_{b}^{+}\wedge\tau_{c}^{-}\Big]\\
=&\ \E\bigg[\sum_{t}\mathbf{1}\big(L^{-1}(t-)<e_{p}<L^{-1}(t)\big) \cdot\mathbf{1}\big(e_{p}-L^{-1}(t-)<\zeta_{t}\wedge\kappa_{b,t}^{+}\wedge\kappa_{c,t}^{-}\big)\\
&\quad\times f\big(\epsilon_{t}(e_{p}-L^{-1}(t-))\big)\cdot\exp\Big((p-q)\int_{0}^{e_{p}-L^{-1}(t-)}\mathbf{1}\big(\epsilon_{t}(s)<0)\big)\,ds\Big)\\
&\quad\times \exp\Big((p-q)\sum_{s<t}\zeta_{s}^{-}\Big)\cdot
\mathbf{1}\big(\zeta_{s}<\infty,\zeta_{s}<\kappa_{b,s}^{+}\wedge\kappa_{c,s}^{-}, \forall s<t\big)\bigg].
\end{aligned}\]
By the memoryless property of $e_{p}$ on $\{e_{p}>L^{-1}(t-)\}$ and the fact $L^{-1}(t-)=\sum_{s<t}\zeta_{s}$ from \eqref{d=0}, we have
\[\begin{aligned}
&\ p \int_{0}^{\infty}\E\Big[e^{-\mo(t)}f\big(X(t)\big); t<\tau_{b}^{+}\wedge\tau_{c}^{-}\Big]\,dt\\
=&\ \E\bigg[\sum_{t} f\big(\epsilon_{t}(e_{p})\big)\cdot\exp\Big((p-q)\int_{0}^{e_{p}}\mathbf{1}\big(\epsilon_{t}(s)<0)\big)\,ds\Big)\cdot\mathbf{1}\big(e_{p}<\zeta_{t}\wedge\kappa_{b,t}^{+}\wedge\kappa_{c,t}^{-}\big)\\
&\quad\times \exp\Big((p-q)\sum_{s<t}\zeta_{s}^{-}-pL^{-1}(t-)\Big)\cdot
\mathbf{1}\big(\zeta_{s}<\infty,\zeta_{s}<\kappa_{b,s}^{-}\wedge\kappa_{c,s}^{-}, \forall s<t\big)\bigg]\\
=&\ p\cdot \E\bigg[\sum_{t}\Big(\int_{0}^{\zeta_{t}\wedge\kappa_{b,t}^{+}\wedge\kappa_{c,t}^{-}}e^{-\eta^{(p,q)}(s)}f\big(\epsilon_{t}(s)\big)\,ds\Big)\cdot e^{-\sum_{s<t}\eta^{(p,q)}(\epsilon_{s},\zeta_{s})}\mathbf{1}\big(\zeta_{s}<\kappa_{b,s}^{+}\wedge\kappa_{c,s}^{-},\forall s<t\big)\bigg].
\end{aligned}\]
Applying the compensation formula, \eqref{lapocc:resl} and \eqref{eqn:A2} we have \eqref{n:pq:resl}. This completes the proof.
\end{proof}


\end{document}